\newcommand{\tabfrac}[3][]{\ensuremath{#1\sfrac{#2}{#3}}} \usepackage{mathrsfs}
\def\tareesidedbox#1{\setbox0=\hbox{$#1$}\dimen0=\wd0 \advance\dimen0 by3pt\rlap{\hbox{\vrule height9pt width.4pt depth2pt \kern-.4pt\vrule height9.4pt width\dimen0 depth-9pt\kern-.4pt \vrule height9pt width.4pt depth2pt}} \relax \hbox to\dimen0{\hss$#1$\hss}}
\def\ho#1{\tareesidedbox{#1}}
\def\other{\eta}
\def\Q{\ensuremath{\mathbb{Q}}}
\def\Z{\ensuremath{\mathbb{Z}}}
\def\M{\ensuremath{\mathscr{M}}}
\def\Num{\ensuremath{\mathscr{N}}}
\def\field#1{\ensuremath{\Q(\zeta_{#1})}}
\def\ring#1{\ensuremath{\Z(\zeta_{#1})}}
\def\Norm{\mathrm{Norm}}
\def\robeqone#1{\ensuremath{2 \cos (\pi / #1)}}
\def\robeqtwo#1{\ensuremath{\sqrt{1 + 4 \cos^2 (\pi / #1)}}}
\def\robnumone{\ensuremath{\sqrt{\frac{5 + \sqrt{13} }{2}}}}
\def\robnumtwo{\ensuremath{\frac{\sqrt7 + \sqrt 3}{2}}}
\def\ournumone{\ensuremath{1+\zeta_{70}+\zeta_{70}^{10}+\zeta_{70}^{29}}}
\newcommand{\e}[2][]{\ensuremath{e^{2 \pi i {#1} / {#2}}}}
\newcommand{\infrac}[2]{\ensuremath{#1/#2}}
\def\WLOG{without loss of generality }
\newcommand\arraySpace{\rule{0pt}{3.1ex}}
\newtheorem{thm}{Theorem}
\newtheorem{lem}{Lemma}
\newtheorem{com}{FC}
\newtheorem{com2}{Comment}
\newtheorem*{defn}{Definition}
\newtheorem*{rem}{Remark}
\title{On the magnitudes of some small cyclotomic integers}
\author{Frederick Robinson \\ \texttt{frobinson@ucla.edu} \and Michael Wurtz \\ \texttt{wurtz@u.northwestern.edu}}
\begin{document}

\maketitle

\begin{abstract}
We prove the last of five outstanding conjectures made by R.M. Robinson from 1965 concerning small cyclotomic integers. In particular, given any cyclotomic integer $\beta$ all
of whose conjugates have absolute value at most $5$, we prove that the \emph{largest} such conjugate has absolute value one of  four explicit types given by 
two infinite classes and
two exceptional cases. We also extend this result by showing that with the addition of one form, the conjecture is true for $\beta$ with magnitudes up to $5+\infrac{1}{25}$.
\end{abstract}

\tableofcontents

\section{Introduction}

%\comment{Checking for packages: \tabfrac{5}{3} \M}

In~\cite{Robinson}, Raphael Robinson made a study of small cyclotomic integers, namely, cyclotomic integers $\alpha$ all of whose conjugates lie in $|z| \le R$ for $R = 2$ and $R = \sqrt{5}$. Robinson made a sequence of five conjectures concerning these numbers, four of which were proved by Schinzel~\cite{Schinzel}, Cassels~\cite{Cassels}, and Jones~\cite{Jones1, Jones2}. In this paper, we resolve the final outstanding conjecture. First, we recall the following definition:

\begin{defn}[House]For a cyclotomic integer $\beta$, let the house of $\beta$, denoted ~$\ho{\beta}$, be the largest absolute value of all conjugates of $\beta$.
\end{defn}

%Note that Jones only proved the finite 3 exceptional set for $\ho\beta \leq \sqrt{5}$. Can we make any other claims - for example, if $\ho{\beta} < \sqrt{5}$, then $\Num(\beta) \leq 6$?

Our main result is as follows:

\begin{thm}[Robinson's Conjecture 4 \cite{Robinson}]\label{resultone}
If $\beta$ is a cyclotomic integer with $\ho{\beta}^2 \leq 5$, then $\ho{\beta}$ has one of the forms
\[ \robeqone{N}, \quad \robeqtwo{N}, \]
where $N$ is a positive integer, or else is equal to one of the two numbers
\[ \robnumone, \quad \robnumtwo.\]\end{thm}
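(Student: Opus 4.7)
The plan is to reduce the problem to studying $\alpha = \beta\bar\beta$, a totally nonnegative cyclotomic integer lying in the maximal real subfield $\field{N}^+$ of the conductor field $\field{N}$ of $\beta$. Since $\ho{\alpha} = \ho{\beta}^2 \le 5$, every Galois conjugate of $\alpha$ lies in the real interval $[0,5]$, so it suffices to classify such $\alpha$ and then recover $\ho{\beta} = \sqrt{\ho{\alpha}}$. Following the general strategy of Cassels~\cite{Cassels} and Jones~\cite{Jones1,Jones2} for the earlier Robinson conjectures, I would first equip $\beta$ with a canonical form: up to multiplication by a root of unity, write
\[ \beta = \sum_{i=1}^k \epsilon_i \zeta_N^{a_i}, \qquad \epsilon_i \in \{\pm 1\}, \]
with $k$ minimal. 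The integer $k$ is a well-defined invariant and controls the entire analysis, as $\alpha = k + \sum_{i \neq j} \epsilon_i \epsilon_j \zeta_N^{a_i - a_j}$ expresses $\alpha$ transparently in terms of Galois orbits of roots of unity.

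The key technical device is trace averaging. For each prime $p$ and each subgroup of the Galois group $\mathrm{Gal}(\field{N}^+/\Q)$, the average of $\alpha$ over the corresponding orbit is a rational number constrained to lie in $[0,5]$ with bounded denominator; combined with an elementary lower bound of the form $\operatorname{tr}_{\field{N}^+/\Q}(\alpha)/[\field{N}^+:\Q] \ge (\text{explicit function of } k)$, this forces $k$ to be small. The same circle of ideas then bounds the prime factorization of $N$: one rules out large prime-power divisors and eventually reduces to a finite list of ``admissible'' conductors $N$, essentially products of a few small primes to small powers. For each admissible $N$ one enumerates all $\alpha \in \field{N}^+$ whose conjugates lie in $[0,5]$ and reads off the resulting values of $\ho{\beta}$. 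The two infinite families $\robeqone{N}$ and $\robeqtwo{N}$ should emerge as the generic maxima coming from $\beta = 1+\zeta_{2N}$ and a related few-term expression in $\field{2N}$, producing Chebyshev-type formulas. The two sporadic numbers fall out of specific small conductors: $\robnumone$, whose square $(5+\sqrt{13})/2$ satisfies $y^2 - 5y + 3 = 0$, points to a $\beta$ living in a field involving $\field{13}$; and $\robnumtwo$, whose square $(5+\sqrt{21})/2$ satisfies $y^2 - 5y + 1 = 0$, points to a $\beta$ in a field containing both $\sqrt{3}$ and $\sqrt{7}$, hence of conductor divisible by $3 \cdot 7$.

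The main obstacle will be the \emph{middle-range} conductors, where the asymptotic trace-averaging inequalities are too crude to conclude the argument outright but the space of candidate $\alpha$ is too large for naive enumeration. Ruling out additional sporadic solutions in fields $\field{N}$ with $N$ a product of several small primes requires sharp tracking of how the Galois-averaged mean of $\alpha$ interacts with the house, and refined inequalities that combine information from several primes simultaneously. The final certification that the listed four types are exhaustive—that no fifth infinite family and no further isolated sporadic value with $\ho{\beta}^2 \le 5$ exists—is the combinatorial heart of the argument, and is precisely the step where all four of Robinson's prior conjectures required distinct case-specific inequalities.
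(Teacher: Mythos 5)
Your outline correctly identifies the broad strategy that the paper in fact follows: pass to $\beta\bar\beta$, squeeze the mean $\M(\beta)$ of $|\beta'|^2$ between lower bounds coming from $\Num(\beta)$ (Cassels' lemmas) and an upper bound coming from the constraint $\ho{\beta}^2\le 5$, use the $p$-adic decomposition of $\Z[\zeta_N]$ to bound the conductor, and finish by enumeration. But as written it is a plan rather than a proof, and the step you defer as ``the combinatorial heart'' is precisely the step that requires the new idea. The decisive input in the paper is the sharp bound $\M(\beta) < \infrac{13}{4}$ whenever $\ho{\beta}^2 \le 5+\infrac{1}{25}$ (unless $\ho{\beta}=\robeqtwo{N}$). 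This is proved not by naive trace averaging but by an auxiliary-function argument: one exhibits explicit irreducible polynomials $P_i$ and positive weights $\alpha_i$ such that $f(x)=\infrac{13}{4}-x-\sum\alpha_i\log|P_i(x)|$ is positive on $[0,5+\infrac{1}{25}]$ wherever defined, sums $f$ over the conjugates of $\beta\bar\beta$, and uses $|\Norm(P_i(\beta\bar\beta))|\ge 1$ to discard the logarithmic terms. Cassels' analogous but weaker bound yields only finiteness of exceptions, with any exception living in $\Z[\zeta_N]$ for an astronomically large $N$; it is exactly the improvement to $\infrac{13}{4}$ (versus $\infrac{23}{6}$ in the totally real case of Calegari--Morrison--Snyder) that collapses the ``middle-range conductors'' you flag as the main obstacle. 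Without an inequality of this strength your trace-averaging step does not terminate: you would reprove Cassels' ineffective finiteness statement and stop there.

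Concretely, the missing pieces are: (i) the auxiliary-function lemma above; (ii) the systematic use of Cassels' identities relating $\M(\beta)$ to the $\M(\alpha_i)$ and $\M(\alpha_i-\alpha_j)$ of the components in the $p$-decomposition, first to show that $N$ or $N/2$ is squarefree, then (running through $p=11,13,17,19,\dots$ and each admissible number $X$ of nonzero components, with Lemma~\ref{conjLemma}-type conjugation estimates to dispose of the survivors) to show $N\mid 420$; and (iii) the terminal analysis in $\field{420}$, which is feasible only because the $\infrac{13}{4}$ bound, fed into the $5$-decomposition, reduces everything to $\Num(\beta)\le 6$, a finite check. Your heuristic identification of the conductors of the sporadic values ($13$, and one divisible by $3\cdot 7$ --- in fact $84$) is correct, but locating where the sporadic values live was never the difficulty; certifying that nothing else occurs is, and that certification is exactly what your sketch postpones.
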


Note that these values do actually occur as $\ho{\beta}$ for some cyclotomic integers (with the exception of $N=1$ in the first equation), specifically, for $\beta$ as follows:
$\zeta_N + \zeta^{-1}_N$,
$ \zeta_4 + \zeta_N + \zeta^{-1}_N$,
$1 + \zeta_{13} + \zeta^4_{13}$, and
$\zeta^{-9}_{84} + \zeta^{-7}_{84} + \zeta^{3}_{84} + \zeta^{27}_{84}$.
The first and last numbers on this list are totally real, so $\ho{\beta} = \beta$ in these cases. In studying this problem, we follow the approach of Cassels~\cite{Cassels}, as well as the recent paper of Calegari, Morrison, and Snyder~\cite{Calegari}, where a version of this theorem is proven for totally real $\beta$.

We actually prove the following stronger statement:

\begin{thm} \label{result}  If $\beta$ is a cyclotomic integer with $\ho{\beta}^2 \le 5 + \infrac{1}{25}$, then
either $\ho{\beta}$ is a number on the list above, or
$$\ho{\beta} = \left|\ournumone\right|, \text{ where } \zeta_{70}=e^{2\pi i / 70} $$
\end{thm}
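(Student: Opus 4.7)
Following Cassels~\cite{Cassels} and Calegari--Morrison--Snyder~\cite{Calegari}, the plan is to first reduce to the case that $\beta$ lies in $\field{N}$ but in no proper cyclotomic subfield, and then to bound both $N$ and the complexity of $\beta$ as a function of the house bound $\ho{\beta}^2 \le 5 + \infrac{1}{25}$. The key inequality is the trace bound $\mathrm{Tr}_{\field{N}/\Q}(\beta\bar\beta) \le \phi(N)\cdot\ho{\beta}^2$, obtained by summing $|\sigma(\beta)|^2 \le \ho{\beta}^2$ over $\sigma \in \mathrm{Gal}(\field{N}/\Q)$. After writing $\beta$ in a chosen integral basis of $\ring{N}$ (typically a product basis coming from the decomposition $N = \prod p_i^{a_i}$, or, when possible, as a sum of roots of unity in $\mu_N$), this bound translates into a combinatorial bound on the number of nonzero coefficients in the expansion of $\beta$.

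\textbf{Reducing $N$ and enumeration.} Next I would apply Schinzel--Cassels--Jones-style descent arguments to restrict the possible $N$. For each odd prime $p$ with $p^k \mid\mid N$ and $k \ge 2$, averaging $\beta$ over the Galois subgroup generated by $\zeta_{p^k}\mapsto \zeta_{p^k}^{1+p^{k-1}}$ produces an element of a smaller cyclotomic field whose house is controlled by $\ho{\beta}$, and iterating forces a descent unless $\beta$ has a very constrained form; analogous techniques bound the $2$-part of $N$ and the set of primes dividing $N$. Since $5+\infrac{1}{25}$ is only marginally larger than the bound $5$ of Theorem~\ref{resultone}, these reductions give essentially the same finite list of candidate pairs $(N,\beta)$ as appear in the proof of Theorem~\ref{resultone}, plus a handful of borderline cases. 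For each remaining $N$, one fixes an explicit basis of $\ring{N}$, enumerates all $\beta$ with at most the allowed number of terms, and computes $\ho{\beta}$ exactly.

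\textbf{The new element and the main obstacle.} The essential new feature of Theorem~\ref{result} compared to Theorem~\ref{resultone} is that a single additional element, \ournumone\ in $\ring{70}$, has house in the narrow interval $(\sqrt 5,\,\sqrt{5+\infrac{1}{25}}\,]$; one must exhibit it explicitly and verify its house by direct computation in $\field{70}$, for instance using the $\field{10}$-basis $\{1,\zeta_7,\ldots,\zeta_7^5\}$ of $\field{70}$. The main obstacle I expect is proving that \emph{no other} element of any $\ring{N}$ has house in this narrow window: the gap $\infrac{1}{25}$ is so small that the generic Cassels-type inequalities used to prove Theorem~\ref{resultone} are too coarse to automatically exclude candidate exceptional elements in those $\field{N}$ with three small prime factors (such as $N=60,\,84,\,105$, and $70$ itself). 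Closing this gap will likely require inequalities sharpened for each such field, combined with a systematic (possibly computer-assisted) enumeration of all elements of $\ring{N}$ with small numbers of summands, to rule out every competitor to \ournumone.
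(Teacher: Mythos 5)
Your outline reproduces the general Cassels-style framework (decompose $\beta$ with respect to prime powers dividing $N$, bound the conductor, enumerate), but it is missing the one ingredient that actually makes this framework close: a sharp upper bound on $\M(\beta)$. The trace inequality you name as ``the key inequality'' only gives $\M(\beta) \le \ho{\beta}^2 \le 5+\infrac{1}{25}$, and that is far too weak. The paper's engine is Lemma~\ref{134}: if $\ho{\beta}^2 \le 5+\infrac{1}{25}$ then $\M(\beta) < \infrac{13}{4}$ (or $\ho{\beta}$ is already of the form $\robeqtwo{N}$), proved by an auxiliary-function argument --- one exhibits explicit polynomials $P_i$ and weights $\alpha_i$ with $\infrac{13}{4} - x - \sum \alpha_i \log|P_i(x)| > 0$ on $[0, 5+\infrac{1}{25}]$ and sums over conjugates of $\beta\bar\beta$, using $|\Norm(P_i(\beta\bar\beta))| \ge 1$. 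It is only with the bound $\infrac{13}{4}$ that Cassels' identities, e.g.\ $(p-1)\M(\beta) = (p-X)\sum\M(\alpha_i) + \sum_{i<j}\M(\alpha_i-\alpha_j)$, force the number $X$ of nonzero components to be tiny and each $\M(\alpha_i)$ to lie in a short explicit list $\{1,\infrac{3}{2},\infrac{5}{3},\dots\}$; with $\M(\beta) \le 5.04$ the case analysis does not terminate. This bound is also what lets the paper restrict to $N \mid 420$ and, inside $\field{420}$, to $\Num(\beta) \le 6$ plus a controlled $5$-decomposition --- your proposal has no mechanism at all for bounding $\Num(\beta)$, which is needed before any finite enumeration can begin.

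A second, structural problem: you propose to fall back on ``essentially the same finite list of candidate pairs $(N,\beta)$ as appear in the proof of Theorem~\ref{resultone}.'' There is no such list to borrow. Theorem~\ref{resultone} had no effective proof prior to this paper --- Cassels' method leaves finitely many ineffective exceptions, known only to lie in $\Z[\zeta_N]$ for an $N$ with $22$ digits --- and the paper proves Theorem~\ref{resultone} as a corollary of Theorem~\ref{result}, not the other way around. Finally, two smaller gaps: your descent for $p^k \,\|\, N$, $k \ge 2$, should be replaced by (or justified via) the exact identity $\M(\beta) = \sum \M(\alpha_i)$ for the $p$-decomposition, and the claim that one ``computes $\ho{\beta}$ exactly'' needs the norm argument the paper supplies (if $|\ho{\beta}-\gamma| < (10+\infrac{1}{25})^{-\varphi(N)}$ then $\ho{\beta} = \gamma$) to convert floating-point computation into exact equality.
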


The main result of Cassels~\cite{Cassels} Implies Theorem~\ref{resultone} with \emph{at most finitely many exceptions}.
The methods of Cassels, however, do not lead to a practical algorithm for determining what those exceptions might
be. Indeed, it is noted in~\cite{Calegari} that any exception must lie in $\Z[\zeta_N]$ for
$$N = 4692838820715366441120 
= 2^5 \cdot 3^3 \cdot 5 \cdot 7 \cdot 11 \cdot 13 \cdot 17 \cdot 19 \cdot 23 \cdot 29 \cdot 31 \cdot 37 \cdot 41
\cdot 47 \cdot 53.$$

\medskip

The motivation for this project is twofold. Most naturally, it was desirable
to answer Robinson's conjecture. Robinson was motivated in part by understanding the relationship
between the house of a cyclotomic integer $\alpha$
and the ``complexity'' of such an integer, as for example measured by the number of roots of unity required to represent $\alpha$. Although this problem was qualitatively answered
by Loxton~\cite{Lox}, those arguments are not effective. Another motivation is
to the interaction between the algebraic number theory of cyclotomic fields and the numerology of subfactors of small index, as occurring (for example) in~\cite{J1} and more recently in~\cite{J2}. This was also the motivation for the recent paper~\cite{Calegari}. Although there is no direct application of our result to the indices of subfactors, it is intriguing that the square of the ``exotic''
case $\sqrt{(5+\sqrt{13})/2}$ of Theorem~\ref{resultone} is also the index of the
first exotic subfactor constructed by Aseada and Haagerup~\cite{AH}. 

\subsection{Some Notation}
 
The following is well known:

\begin{lem}[Cyclotomic Integer] A number $\beta \in \Q(\zeta_N)$ is a cyclotomic integer if and only if $\beta \in \Z(\zeta_N)$ for some $N$, i.e. if $\beta$ can be written as a finite sum of roots of unity.
\end{lem}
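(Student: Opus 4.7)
The plan is to establish the two implications separately. The easy direction is that any finite sum $\sum_i \zeta_{N_i}^{k_i}$ of roots of unity is a cyclotomic integer: setting $N = \mathrm{lcm}\{N_i\}$, each $\zeta_{N_i}$ is a power of $\zeta_N$, so the sum lies in $\ring{N} \subset \field{N}$, and $\ring{N}$ consists entirely of algebraic integers because it is a ring generated over $\Z$ by $\zeta_N$, which is itself an algebraic integer as a root of $X^N - 1$.

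The substantive direction is the converse: if $\beta \in \field{N}$ is an algebraic integer, then in fact $\beta \in \ring{N}$. This is the classical theorem that the full ring of integers $\mathcal{O}_{\field{N}}$ coincides with $\ring{N}$. I would prove this by a discriminant argument: the inclusion $\ring{N} \subseteq \mathcal{O}_{\field{N}}$ is tautological, so it suffices to compute the discriminant of the order $\ring{N}$ over $\Z$ and verify that it agrees with the discriminant of $\field{N}/\Q$, forcing the two orders to coincide.

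The standard route begins with the prime-power case $N = p^k$: the identity $\Phi_{p^k}(X)\cdot(X^{p^{k-1}} - 1) = X^{p^k} - 1$ lets one evaluate $\Phi_{p^k}'(\zeta_{p^k})$ explicitly, yielding a discriminant that is a power of $p$ matching the expected conductor-discriminant value. For general $N = \prod p_i^{k_i}$, the field $\field{N}$ is the compositum of the $\field{p_i^{k_i}}$, and the pairwise coprimality of the prime-power discriminants propagates the equality of rings of integers to the compositum, giving $\mathcal{O}_{\field{N}} = \ring{N}$. The main obstacle is the prime-power discriminant computation, but since this lemma is entirely classical and is not specific to the present work, I would simply cite it from a standard reference such as Washington's \emph{Introduction to Cyclotomic Fields} rather than reproduce the argument.
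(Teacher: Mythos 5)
Your outline is correct: the paper itself offers no proof, simply asserting the lemma as well known, and the substance is exactly the classical theorem $\mathcal{O}_{\Q(\zeta_N)} = \Z[\zeta_N]$, which your discriminant/compositum sketch (or a citation to Washington) handles in the standard way. The only step you leave implicit is converting an element $\sum a_i \zeta_N^i$ of $\Z[\zeta_N]$ into a literal sum of roots of unity, which is immediate since $-1$ is a root of unity and positive integer coefficients can be absorbed by repetition.
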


In light of this, the following definition makes sense:

\begin{defn}[\Num] For a given cyclotomic integer $\beta$, $\Num(\beta)$ is the minimal number of roots of unity whose sum is $\beta$.\end{defn}

Note that given $\alpha$ and $\beta$, we have that $\Num(\alpha) - \Num(\beta) \leq \Num(\alpha \pm \beta) \leq \Num(\alpha) + \Num(\beta)$.

Following Cassels, we also make the following definition:

\begin{defn}[\M] For a given cyclotomic integer $\beta$, $\M(\beta)$ is the arithmetic mean of $|\beta'|^2$ for all conjugates $\beta'$ of $\beta$.
\end{defn}

Note that $|\beta|^2 = \beta \overline{\beta}$ is a cyclotomic integer. Since the Galois group of a cyclotomic extension is abelian, complex conjugation commutes with any automorphism. In particular, $\M(\beta) = \M(\beta')$ for any conjugate $\beta'$ of $\beta$, and moreover $\M(\beta)$ is the (normalized) trace of $|\beta|^2$, and hence lies in $\Q$.

\begin{defn}[Equivalence] Two cyclotomic integers $\alpha$ and $\beta$ are equivalent if $\alpha = \zeta \beta'$ for some $\zeta$ a root of unity and $\beta'$ a conjugate of $\beta$. We write $\alpha \equiv \beta$. \end{defn}

Since every root of unity has absolute value one, it follows that $\M(\zeta \gamma) = \M(\gamma)$ for any root of unity $\zeta$. In particular, if $\alpha \equiv \beta$, then $\M(\alpha) = \M(\zeta \beta') = \M(\beta') = \M(\beta)$.

\begin{defn}[Minimal Cyclotomic Integer] A cyclotomic integer $\beta$ is minimal if $\beta \in \field{N}$, and there is no equivalent $\beta' \in \field{N'}$ with $N' < N$.\end{defn}

Since $\ho{\beta} = \ho{\beta'}$, it suffices to prove the theorem to consider all minimal cyclotomic integers.

\begin{defn}[$\zeta_N$] We always mean a primitive $N$th root of unity by $\zeta_N$, not any $N$th root of unity.\end{defn}

\section{Some Preliminary Results}

\subsection{\texorpdfstring{Properties of \M}{Properties of M}}
\begin{rem}$\Num(\beta) = 1$ if and only if $\M(\beta) = 1$. This follows from~\cite{Kronecker}.
\end{rem}

\begin{lem}
If $\Num(\beta)=2$, either $\M(\beta) \geq \infrac{15}{8}$, or $\M(\beta) = \infrac{3}{2}, \infrac{5}{3}, \infrac{7}{4}, \infrac{9}{5}$, or $\infrac{11}{6}$. The first four values occur only when $\beta$ is equivalent to $1+\zeta_N$ for $N = 5$, $7$, $30$, or $11$ respectively, and $\infrac{11}{6}$ occurs only for $N =13$ or $42$.
\end{lem}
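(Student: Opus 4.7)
My plan is to reduce to the canonical form $\beta \equiv 1 + \zeta_N$ and then obtain a closed-form expression for $\M(1 + \zeta_N)$ in terms of $N$. Since $\Num(\beta) = 2$, we can write $\beta = \zeta + \zeta'$ for two roots of unity; multiplying through by $\zeta^{-1}$ (which does not change the equivalence class, and so preserves $\M$) shows that $\beta \equiv 1 + \eta$ for some root of unity $\eta$. Taking $N$ to be the multiplicative order of $\eta$, we may assume $\eta = \zeta_N$ is primitive, and it suffices to compute $\M(1 + \zeta_N)$ for each positive integer $N$.

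For this computation, the conjugates of $1 + \zeta_N$ are the elements $1 + \zeta_N^k$ with $\gcd(k,N) = 1$, and $|1 + \zeta_N^k|^2 = 2 + \zeta_N^k + \zeta_N^{-k}$. Averaging over the Galois group gives
\[
  \M(1 + \zeta_N) \;=\; 2 \;+\; \frac{2}{\varphi(N)} \sum_{\substack{1 \le k \le N \\ \gcd(k,N) = 1}} \zeta_N^k \;=\; 2 \;+\; \frac{2 \mu(N)}{\varphi(N)},
\]
where the last equality is the classical identity expressing the sum of the primitive $N$th roots of unity as the Möbius function $\mu(N)$. In particular, if $\mu(N) \ge 0$ then $\M(\beta) \ge 2 > 15/8$, so every $\beta$ whose $\M$ falls below the $15/8$ threshold must have $\mu(N) = -1$ and $\M(\beta) = 2 - 2/\varphi(N)$, which forces $\varphi(N) < 16$.

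It remains to enumerate the squarefree $N$ with an odd number of prime factors and $\varphi(N) \le 14$: the primes $2, 3, 5, 7, 11, 13$ together with the three-prime products $30 = 2 \cdot 3 \cdot 5$ and $42 = 2 \cdot 3 \cdot 7$ (larger three-prime products already exceed the bound on $\varphi$, and five-prime products are vastly larger). The cases $N = 2$ (yielding $\beta = 0$) and $N = 3$ (yielding $1 + \zeta_3 = -\zeta_3^2$, a single root of unity) violate the hypothesis $\Num(\beta) = 2$ and must be discarded; for each remaining $N$ the value $\M(1 + \zeta_N) \ne 1$ together with Kronecker's theorem forces $\beta$ not to be a root of unity, so $\Num(\beta) = 2$ automatically. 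Substituting $\varphi(N) = 4, 6, 10, 12, 8, 12$ for $N = 5, 7, 11, 13, 30, 42$ into $2 - 2/\varphi(N)$ produces exactly $\M = 3/2, 5/3, 9/5, 11/6, 7/4, 11/6$, which is the lemma's list. The main obstacle is not conceptual but combinatorial: one must be careful to correctly discard the two small-$N$ degenerations where $1 + \zeta_N$ collapses to a sum of fewer than two roots of unity, while verifying that no other $N$ slips below the threshold; this is straightforward thanks to the Möbius-function formula.
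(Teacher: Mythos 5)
Your proof is correct and takes essentially the same route as the paper: reduce to $\beta \equiv 1+\zeta_N$ and apply the formula $\M(1+\zeta_N) = 2(1+\mu(N)/\varphi(N))$, then enumerate the $N$ with $\mu(N)=-1$ and $\varphi(N)<16$. The only difference is that you derive the M\"obius formula from the Ramanujan-sum identity and carry out the enumeration explicitly, where the paper simply cites the formula and asserts the result follows.
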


\begin{proof} The sum of two roots of unity is equivalent to $1 + \zeta_N$ for some $N$. 
One computes directly that $\M(1+\zeta_N) = 2(1+\infrac{\mu(N)}{\varphi(N)})$, where $\mu$ is the M\"obius $\mu$-function and $\varphi$ is Euler's totient function, from which the result follows (cf.~\cite{Calegari} Remark 9.0.2). 
\end{proof}

\begin{rem}[Cassels' Lemma 3 \cite{Cassels}]
If $\Num(\beta) \geq 3$, then $\M(\beta) \geq 2$.
\end{rem}

\begin{rem}[Cassels' section 3 \cite{Cassels}] If $\beta \in \ring{N}$, and $p^n$ exactly divides $N$, then we can write $\beta$ as a sum of products of ${p^n}^{th}$ roots of unity with $\other_j \in \ring{N/p}$. Write $\beta = \sum_{j=0}^{p-1} \zeta_{p
^n}^j \other_j$, and let X be the number of non-zero terms in the summation. Let $\alpha_i$, $1 \leq i \leq X$, refer to the $X$ nonzero $\other_j$.

If $p$ exactly divides $N$, note that this representation is unique up to adding a constant to all $\other_i$. We have the equality
\begin{equation}\label{exactEqn}(p-1) \M(\beta) = (p-X) \sum_{i=1}^X \M (\alpha_i) + \sum_{1\leq i <j \leq X}  \M(\alpha_i-\alpha_j).\end{equation}
On the other hand, if $n>1$, then this representation is unique. In this case, we have the equality
\begin{equation}\label{squareEqn}\M(\beta) = \sum_{i=1}^X \M(\alpha_i).\end{equation}
\end{rem}

\subsection{Conjugation}

Throughout the paper, in many cases we will need to show for $\beta$ the sum of two given cyclotomic integers, that $\ho{\beta}^2 > 5+\infrac{1}{25}$, and thus $\beta$ is not an exception to the theorem. One common method of proving this is as follows:

\begin{lem}\label{conjLemma}
Suppose $\beta$ is equivalent to $\alpha + \zeta_{p^n} \gamma$, where $\alpha \in \field{M'}$ and $\gamma \in \field{M''}$. Let $m$ be the largest integer such that $\zeta_{p^m} \in \field{M'}$ or $\field{M''}$. Then if $m<n$,
\begin{equation} \ho{\beta}^2 \geq |\alpha|^2 + |\gamma|^2 + 2 \ |\alpha| \cdot |\gamma| \cdot \cos(\theta) \end{equation}

where 
\[ \theta = \left\{
\begin{array}{ll}
2 \pi / p^n & \mathrm{if\ } m=0 \\
\pi / p^{n-m}& \mathrm{if\ } m>0.
\end{array}
\right.\]

Moreover, if $(M', M'')=1$, then
\begin{equation} \ho{\beta}^2 \geq \ho{\alpha}^2 + \ho{\gamma}^2 + 2 \ \ho{\alpha} \cdot \ho{\gamma} \cdot \cos(\theta).\end{equation}

\end{lem}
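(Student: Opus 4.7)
The plan is to exhibit, for each stated bound, an explicit conjugate of $\beta$ that realizes it. Set $F=\Q(\zeta_{M'},\zeta_{M''})$; by the definition of $m$, the $p$-part of the conductor of $F$ is exactly $p^m$, so the extension $F(\zeta_{p^n})/F$ is nontrivial (since $m<n$) and its Galois group is the subgroup of $(\Z/p^n\Z)^\times$ given by $S=(\Z/p^n\Z)^\times$ when $m=0$, and $S=\{k:k\equiv 1\pmod{p^m}\}$ when $m>0$. For each $k\in S$, the automorphism $\sigma_k:\zeta_{p^n}\mapsto\zeta_{p^n}^{k}$ fixes $\alpha$ and $\gamma$ pointwise, so $\sigma_k(\beta)=\alpha+\zeta_{p^n}^{k}\gamma$ is a conjugate of $\beta$. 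A direct expansion gives
\[
|\sigma_k(\beta)|^2 \;=\; |\alpha|^2 + |\gamma|^2 + 2|\alpha||\gamma|\cos\!\bigl(\tfrac{2\pi k}{p^n}+\phi\bigr),\qquad \phi=\arg(\overline{\alpha}\gamma),
\]
so the first inequality follows as soon as I can find $k\in S$ with $\bigl|\tfrac{2\pi k}{p^n}+\phi\bigr|\le \theta \pmod{2\pi}$.

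That reduction is a question about the angular density of $\{2\pi k/p^n:k\in S\}$ on the unit circle. When $m>0$, these angles form an arithmetic progression with common difference $2\pi/p^{n-m}$, so every real number is within $\pi/p^{n-m}$ of one of them. When $m=0$, they are the angles of the primitive $p^n$-th roots of unity: consecutive gaps all equal $2\pi/p^n$ except at each of the $p^{n-1}$ non-primitive $p^{n-1}$-th roots, where two adjacent primitive roots are $4\pi/p^n$ apart, yielding a worst-case approximation error of $2\pi/p^n$. Both cases match the stated $\theta$ exactly.

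For the refinement under $(M',M'')=1$, I would first invoke the CRT isomorphism $\mathrm{Gal}(\Q(\zeta_{M'M''})/\Q)\cong\mathrm{Gal}(\field{M'}/\Q)\times\mathrm{Gal}(\field{M''}/\Q)$ to pick an automorphism $\tau$ sending $\alpha$ to a chosen conjugate $\alpha'$ with $|\alpha'|=\ho{\alpha}$ and, independently, $\gamma$ to a conjugate $\gamma'$ with $|\gamma'|=\ho{\gamma}$. Extending $\tau$ to $\Q(\zeta_{M'M''p^n})$ produces $\tau(\zeta_{p^n})=\zeta_{p^n}^{a}$ for some $a\in(\Z/p^n)^\times$, so $\tau(\beta)=\alpha'+\zeta_{p^n}^{a}\gamma'$ is again a conjugate of $\beta$. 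Rerunning the previous argument on this expression --- noting that the translated index set $aS$ has the same consecutive-gap structure as $S$ because $a$ is a unit --- yields the sharper bound with $\ho{\alpha},\ho{\gamma}$ in place of $|\alpha|,|\gamma|$. The Galois-theoretic setup is routine; the one step that demands care is the angular-gap computation when $m=0$, where the primitive $p^n$-th roots of unity are not uniformly distributed, and one must verify that the largest gap, symmetric around each non-primitive $p^{n-1}$-th root, is exactly $4\pi/p^n$.
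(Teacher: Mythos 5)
Your proposal is correct and follows essentially the same route as the paper: use the Galois automorphisms fixing $\alpha$ and $\gamma$ to rotate $\zeta_{p^n}$ among the admissible exponents, bound the worst angular gap ($2\cdot 2\pi/p^n$ between adjacent primitive $p^n$-th roots when $m=0$, equal spacing $2\pi/p^{n-m}$ when $m>0$), and, for the coprime case, first conjugate $\alpha$ and $\gamma$ simultaneously to their largest conjugates before applying the first part. Your treatment of the coset $aS$ after the simultaneous conjugation is a detail the paper leaves implicit, but the arguments are the same.
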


\begin{proof}
By assumption on $m$ and $n$, there exists a Galois automorphism sending  $\zeta_{p^n}$ to $\zeta_{p^n}^i$ and fixing $\alpha$ and $\gamma$ as long as $(i,p) = 1$ and $\zeta_{p^m} = \zeta_{p^m}^i$, i.e. when $i \equiv 1 \mod p^m$. If $m=0$, we may conjugate  $\zeta_{p^n}$ to any other \emph{primitive} $p^n$-th root of unity. The largest angle between two adjacent primitive $p^n$-th roots of unity is $2 \cdot 2 \pi / p^n$, so we can place the argument of $\zeta^i_{p^n} \gamma$  to within $2 \pi / p^n$ of the argument of $\alpha$. If $m>0$, then there are $p^{n-m}$ equally spaced primitive $p^n$-th roots of unity that are congruent to $1 \hspace{-1.5mm} \mod p^m$. We can then guarantee that some conjugate of $\beta$ is $\alpha + \zeta_{p^n}^i \gamma$, where the difference in arguments between $\alpha$ and $\zeta_{p^n}^i \gamma$ is at most $\pi / p^{n-m}$.

For the second claim, if $(M', M'')=1$, then we may \emph{simultaneously} conjugate
$\alpha$ and $\gamma$ to  their largest conjugate, and then apply the first part of the Lemma.
\end{proof}

\subsection{A Note on Computational Accuracy}
In several places we have verified results through the use of a computer. For example, given $\beta$, we wish to know if $\ho{\beta}$ is equal to some $\gamma$ from theorem~\ref{result}. We show, that by computing $\ho{\beta}$ to a necessary degree of accuracy, we can claim that $\ho{\beta}$ is equal to $\gamma$, and not just very near to it.

\begin{lem} Suppose $\beta$ is a cyclotomic integer, $\gamma$ is on the list of theorem~\ref{result}, and $k = [\field{N} : \Q] = \varphi(N)$, where $\beta, \gamma \in \field{N}$. If $|\ho{\beta} - \gamma| < (10+\infrac{1}{25})^{-k}$, then $\ho{\beta} = \gamma$.
\end{lem}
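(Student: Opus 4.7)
The plan is to give a Liouville-type lower bound on $|\ho{\beta} - \gamma|$ whenever $\ho{\beta} \neq \gamma$, and then take the contrapositive. The key object is the difference of squares $\delta := \ho{\beta}^2 - \gamma^2$. Note that under the hypothesis, $\ho{\beta}$ is within $(10+1/25)^{-k} < 1$ of $\gamma \leq \sqrt{5 + 1/25}$, so I may safely work in the regime $\ho{\beta}^2 \leq 5 + 1/25$ (any exceedance is so small as to leave the estimates below intact).

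First I would observe that $\delta$ lies in $\ring{N}$. On one side, $\ho{\beta}^2 = \beta^{*} \overline{\beta^{*}}$ for a conjugate $\beta^{*}$ of $\beta$ attaining the house, so $\ho{\beta}^2 \in \ring{N}$. On the other side, every $\gamma$ appearing on the list is itself the house of an explicit cyclotomic integer $\alpha$ (namely $\zeta_N + \zeta_N^{-1}$, $\zeta_4 + \zeta_N + \zeta_N^{-1}$, $1 + \zeta_{13} + \zeta_{13}^4$, $\zeta_{84}^{-9} + \zeta_{84}^{-7} + \zeta_{84}^{3} + \zeta_{84}^{27}$, or $\ournumone$), so $\gamma^2 = \alpha \bar\alpha \in \ring{N}$ as well. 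Because $\ho{\beta}$ and $\gamma$ are non-negative real numbers, $\ho{\beta} \neq \gamma$ forces $\delta \neq 0$, and therefore $|\Norm_{\field{N}/\Q}(\delta)| \geq 1$.

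The crux is a uniform bound on the Galois conjugates of $\delta$. For any $\sigma \in \mathrm{Gal}(\field{N}/\Q)$, complex conjugation commutes with $\sigma$, so $\sigma(\ho{\beta}^2) = |\sigma(\beta^{*})|^2 \in [0, \ho{\beta}^2]$ (the upper bound because $\sigma(\beta^{*})$ is merely another conjugate of $\beta$), and analogously $\sigma(\gamma^2) = |\sigma(\alpha)|^2 \in [0, \gamma^2]$. Both intervals sit inside $[0, 5 + 1/25]$, so $\sigma(\delta)$, a difference of two such numbers, satisfies $|\sigma(\delta)| \leq 5 + 1/25$. Peeling off the identity factor from the norm yields
\[ 1 \leq |\delta| \prod_{\sigma \neq \mathrm{id}} |\sigma(\delta)| \leq |\delta| \cdot (5 + 1/25)^{k-1}, \qquad \text{so} \qquad |\delta| \geq (5 + 1/25)^{-(k-1)}. \]

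Finally, from the factorization $|\delta| = |\ho{\beta} - \gamma| (\ho{\beta} + \gamma) \leq 2 \sqrt{5 + 1/25} \cdot |\ho{\beta} - \gamma|$, I deduce $|\ho{\beta} - \gamma| \geq \tfrac{1}{2} (5 + 1/25)^{-(k - 1/2)}$. The desired inequality $(10 + 1/25)^{-k} \leq \tfrac{1}{2}(5 + 1/25)^{-(k - 1/2)}$ reduces to $(5 + 1/25)^{1/2} \geq 2$—i.e.\ $5 + 1/25 \geq 4$—together with $(10 + 1/25)/(5 + 1/25) \geq 1$, both trivial, so the contrapositive closes the argument. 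The only step requiring care is the conjugate bound: the naive estimate $|\sigma(\delta)| \leq 2(5 + 1/25)$ would be too weak for large $k$, and the saving observation is that $\sigma(\ho{\beta}^2)$ and $\sigma(\gamma^2)$ are both squared absolute values, hence nonnegative, so they cannot "double up" to $10 + 2/25$ in absolute difference but are instead confined to $[-(5 + 1/25), 5 + 1/25]$.
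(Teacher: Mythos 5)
Your argument is correct, and while it rests on the same core mechanism as the paper's proof---a nonzero algebraic integer has norm at least $1$, so bounding all conjugates forces the quantity to vanish---you apply it to a different object, and the difference is substantive. The paper sets $\delta = |\ho{\beta}-\gamma|$ and asserts that $\delta$ is a cyclotomic integer in $\field{N}$ with at most $k$ conjugates, each of magnitude at most $10+\infrac{2}{25}$; this is slightly delicate, since $\ho{\beta}=\sqrt{\beta^*\overline{\beta^*}}$ is only a priori an algebraic integer in a quadratic extension of the real subfield, not obviously an element of $\field{N}$. You instead work with $\ho{\beta}^2-\gamma^2$, which genuinely lies in $\ring{N}$, has at most $k$ conjugates, and---thanks to your observation that each conjugate is a difference of two squared absolute values, hence confined to $[-(5+\infrac{1}{25}),\,5+\infrac{1}{25}]$ rather than an interval of twice that length---admits the sharper conjugate bound $5+\infrac{1}{25}$. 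The price is the extra factorization $|\ho{\beta}^2-\gamma^2|=|\ho{\beta}-\gamma|(\ho{\beta}+\gamma)$ and a final numerical comparison of $(10+\infrac{1}{25})^{-k}$ against $\tfrac12(5+\infrac{1}{25})^{-(k-1/2)}$, both of which you carry out correctly. Two small points you rightly flag but could make fully explicit: the lemma does not literally assume $\ho{\beta}^2\le 5+\infrac{1}{25}$, so one must check (as you sketch) that the hypothesis forces $\ho{\beta}^2$ close enough to $\gamma^2$ that the constants still close; and when $\gamma^2=\alpha\bar\alpha$ for an $\alpha$ living in a possibly larger cyclotomic field, the Galois automorphism $\sigma$ of $\field{N}$ should be extended to a common cyclotomic field before writing $\sigma(\gamma^2)=|\tilde\sigma(\alpha)|^2$. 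Neither affects the validity of the argument.
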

\begin{proof}
Let $\delta = |\ho{\beta} - \gamma|$, then $\delta$ is also a cyclotomic integer in $\field{N}$ and $\delta$ has at most $k$ conjugates. Denote the conjugates by $\delta_1, \dots, \delta_i$ with $\delta_1=\delta$. As all conjugates of $\ho{\beta}$ and $\gamma$ have magnitude at most $5+\infrac{1}{25}$, all conjugates of $\delta$ have magnitude at most $10+\infrac{2}{25}$. Then $|\Norm(\delta)| = |\delta_1 \cdots \delta_i| \leq \delta (10+\infrac{2}{25})^{k-1} < 1$. $|\Norm(\delta)| < 1$ if and only if $\Norm(\delta)=0=\delta$, so $\ho{\beta} = \gamma$.
\end{proof}

\subsection{Theorem~\ref{result} when \texorpdfstring{$\Num(\beta) \leq 3$}{N(B) is less than or equal to 3}}

In this section, we recall known results that allow us to deduce Theorem~\ref{result}  in the special
case when $\Num(\beta) \le 3$:
\begin{enumerate}
\item
If $\Num(\beta)=1$, then $\ho{\beta}=1=\robeqone{3}$.
\item
If $\Num(\beta)=2$, then $\beta \equiv 1+\zeta_n$ for some $n$ and $\ho{\beta}=2|\cos(\pi / n)|$.
\item
If $\Num(\beta)=3$, Jones'~\cite{Jones2} Theorem 2 states that if $\ho{\beta} \leq 1 + \sqrt{2}$, then $\beta$ is equivalent to $1+\zeta_n-\zeta_n^{-1}$, $1 \pm i + \zeta_n$, or one of $15$ numbers that he lists.

In the first case, $\beta$ equivalent to $1+\zeta_n-\zeta_n^{-1}$, we have that $\ho{\beta}$ is equal to $\robeqtwo{M}$ where the value of $M$ depends on $n$ in a slightly subtle way. In particular,
\[ M(n) = \left\{
\begin{array}{ll}
2n & \mathrm{if\ } n \text{ is odd} \\
n & \mathrm{if\ } n/2 \text{ is odd} \\
n/4 & \mathrm{if\ } n/4 \text{ is odd} \\
n/2 & \mathrm{if\ } n/4 \text{ is even}.
\end{array}
\right.\]

In the second case, $\beta$ is equivalent to $1 \pm i + \zeta_n$. Lemma~\ref{conjLemma} proves that if $n$ does not divide $2^4 \cdot 3 \cdot 5 \cdot 7$, then $\ho{\beta}>\sqrt{5+\infrac{1}{25}}$ (by letting $\alpha = 1 + i$). There are then $40$ divisors of $2^4 \cdot 3 \cdot 5 \cdot 7$ that were checked computationally.

We checked each number in the third case, and all were equal to a form from Robinson.
\end{enumerate}

\section{An upper bound for \texorpdfstring{$\M(\beta)$}{M(B)}}

Many of our arguments are based on the following Lemma:

\begin{lem}\label{134}If $\beta$ is a cyclotomic integer with $\ho{\beta}^2 \leq 5 + \infrac{1}{25}$, then $\M(\beta) < \infrac{13}{4}$ or $\ho{\beta} = \robeqtwo{N}$ for some $N$.
\end{lem}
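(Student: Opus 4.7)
The plan is to prove the contrapositive: assuming $\ho{\beta}^2 \leq 5 + \infrac{1}{25}$ and $\M(\beta) \geq \infrac{13}{4}$, I will show that $\ho{\beta} = \robeqtwo{N}$ for some positive integer $N$. If $\Num(\beta)\leq 3$, then by the discussion in Section 2.4 the house $\ho{\beta}$ already lies on the explicit list from Theorem~\ref{result}, and using the formula for $\M(1+\zeta_N)$ together with a short direct check on the $15$ exceptional sums listed by Jones one verifies that $\M(\beta)\geq \infrac{13}{4}$ is compatible only with $\ho{\beta}$ of type $\robeqtwo{N}$. So I may assume throughout that $\Num(\beta)\geq 4$ and that $\beta$ is minimal.

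For the main argument I pick a prime $p$ dividing the conductor $N$ of $\beta$ and write Cassels' decomposition $\beta=\sum_{j=0}^{p-1}\zeta_{p^n}^{j}\other_j$ with $\other_j\in\ring{N/p}$. Let $\alpha_1,\dots,\alpha_X$ denote the nonzero $\other_j$; minimality of $\beta$ forces $X\geq 2$. When $p^2$ divides $N$, equation~\eqref{squareEqn} gives $\M(\beta)=\sum_i \M(\alpha_i)$, and combined with $\M(\beta)\leq \ho{\beta}^2\leq 5+\infrac{1}{25}$ and $\M(\alpha_i)\geq 1$ this immediately yields $X\leq 5$. When $p$ exactly divides $N$, equation~\eqref{exactEqn} plays the analogous role, and additionally controls the differences $\alpha_i-\alpha_j$ via $\M(\alpha_i-\alpha_j)\geq 1$.

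The next step is to feed these structural constraints into Lemma~\ref{conjLemma}, which (after arranging $\beta\equiv\alpha+\zeta_{p^n}\gamma$ with two of the $\alpha_i$'s in the roles of $\alpha$ and $\gamma$) gives
\[\ho{\beta}^2\;\geq\;|\alpha|^2+|\gamma|^2+2|\alpha|\,|\gamma|\cos\theta\]
for an explicit angle $\theta$ depending only on $p$ and $n$. Comparing with the assumption $\ho{\beta}^2\leq 5+\infrac{1}{25}$ forces $\theta$ to be large (so that only small primes $p$ and small exponents $n$ survive) and also forces $|\alpha|$ and $|\gamma|$ to be small. Iterating this pruning over the primes dividing $N$, the support of $\beta$ collapses until $\beta$ is equivalent to one of the short sums $\zeta_4+\zeta_M+\zeta_M^{-1}$ or $1+\zeta_M-\zeta_M^{-1}$, each of whose house is of the promised form $\robeqtwo{M'}$ for an explicit $M'$.

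The main obstacle will be the casework at the end: a handful of small conductors (at $p=2,3,5$ and small products thereof) escape the angle estimates from Lemma~\ref{conjLemma} and must instead be handled by direct numerical computation of $\M(\beta)$ and $\ho{\beta}$ to the accuracy guaranteed by the lemma of Section 2.3. A secondary subtlety is that when several primes divide $N$, one must combine the constraints extracted from Cassels' decomposition at the different primes consistently rather than double-count them.
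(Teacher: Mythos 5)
Your proposal does not contain the idea that actually proves this lemma, and the machinery you substitute for it cannot close the argument. The paper's proof is an auxiliary-function argument applied to the totally positive algebraic integer $\beta\bar\beta$: one exhibits an explicit function $f(x)=\infrac{13}{4}-x-\sum\alpha_i\log|P_i(x)|$, with the $P_i$ irreducible and totally positive, which is positive on $[0,5+\infrac{1}{25}]$ wherever defined; summing $f$ over the conjugates $x_j$ of $\beta\bar\beta$ and using that each $|\Norm(P_i(\beta\bar\beta))|$ is a nonzero rational integer (hence $\geq 1$, so its logarithm is $\geq 0$) yields $\M(\beta)<\infrac{13}{4}$ outright, unless $\beta\bar\beta$ is a root of some $P_i$ --- in which case it is the largest root, and each such largest root (excluding one polynomial whose non-abelian Galois group rules out cyclotomic $\beta$) equals $1+4\cos^2(\pi/N)$. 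Nothing in your outline produces a trace bound of this kind; you never explain why the hypothesis $\M(\beta)\geq\infrac{13}{4}$, as opposed to, say, $\M(\beta)\geq 3$, should force $\ho{\beta}$ into the special family. The dichotomy in the lemma is genuinely about which totally positive integers in $(0,5+\infrac{1}{25}]$ can have large trace, and that is an analytic/Diophantine fact, not one extracted from conductor casework.

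Moreover, the route you propose is effectively circular relative to the paper's architecture. The Cassels decomposition plus Lemma~\ref{conjLemma} pruning that you describe is precisely what Sections 4--6 of the paper carry out, and every step there is powered by the inequality $\M(\beta)<\infrac{13}{4}$ (via equation~\eqref{134eqn}) to cut $X$ and the $\M(\alpha_i)$ down to a handful of cases; even the bound $X\leq(p-1)/2$ for $p\geq 11$ is derived from this lemma. Replacing $\infrac{13}{4}$ by the only bound available to you, $\M(\beta)\leq\ho{\beta}^2\leq 5+\infrac{1}{25}$, the constraint $(p-X)X\lesssim 5.04\,(p-1)$ from equation~\eqref{exactEqn} excludes essentially nothing (for $p=11$ it is satisfied by every $X\leq 10$), and the admissible $\M$-values of the pieces $\alpha_i$ are no longer confined to the short list $1,\infrac{3}{2},\infrac{5}{3},\dots$ that makes the later casework finite. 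This is exactly the intractable search that Cassels' method leads to (conductors up to $N\approx 4.7\times 10^{21}$) and that the present lemma exists to avoid. To repair the proof you would need to import the auxiliary-function (Schur--Siegel--Smyth--type) argument, or an equivalent lower bound for the trace of totally positive algebraic integers confined to $(0,5+\infrac{1}{25}]$.
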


\begin{rem} \emph{One should compare this with  Lemma~5.1.1 of~\cite{Calegari}, where, assuming the slightly weaker condition $\ho{\beta} \leq \infrac{76}{33}$, it is shown that $\M(\beta) < \infrac{23}{6}$. The significant improvement ($\infrac{23}{6} = \infrac{13}{4} + \infrac{7}{12}$) in our upper bound for $\M(\beta)$ (at the cost of a stronger bound on $\ho{\beta}$) is what allows us to push the methods of Cassels and~\cite{Calegari} to prove Robinson's conjecture.}
\end{rem}

\begin{proof}
Let $P_i$ and $\alpha_i$ be as below (note that all $P_i$ are irreducible over \Z, and their roots are real and positive):
\[
\begin{array}{l|l|r|l}
i & P_i & 1000\alpha_i & N\\
\hline
1 & x-3 & 110 & 4\\
2 & x-4 & 530 & 6\\
3 & x-5 & 620 & 1\\
4 & x^2-6x+6 & 18 & 12\\
5 & x^2-6x+7 & 28 & 8\\
6 & x^2-7x+11 & 194 & 10\\
7 & x^3-10x^2+31x-29 & 130 & 14\\
8 & x^4-13x^3+58x^2-98x+41 & 45 & D_8\\
9 & x^4-13x^3+59x^2-107x+61 & 40 & 15
\end{array}
\]
Let $f(x) =\infrac{13}{4} - x - \sum \alpha_i \log|P_i(x)|$. We claim that $f(x)$ is positive for all values of $x$ in $[0,5+\infrac{1}{25}]$ where it is defined (there are many asymptotes where $f(x) \to +\infty$). Note that $f$ is defined everywhere that is not a root of some $P_i$.
\[
\includegraphics[height=210px]{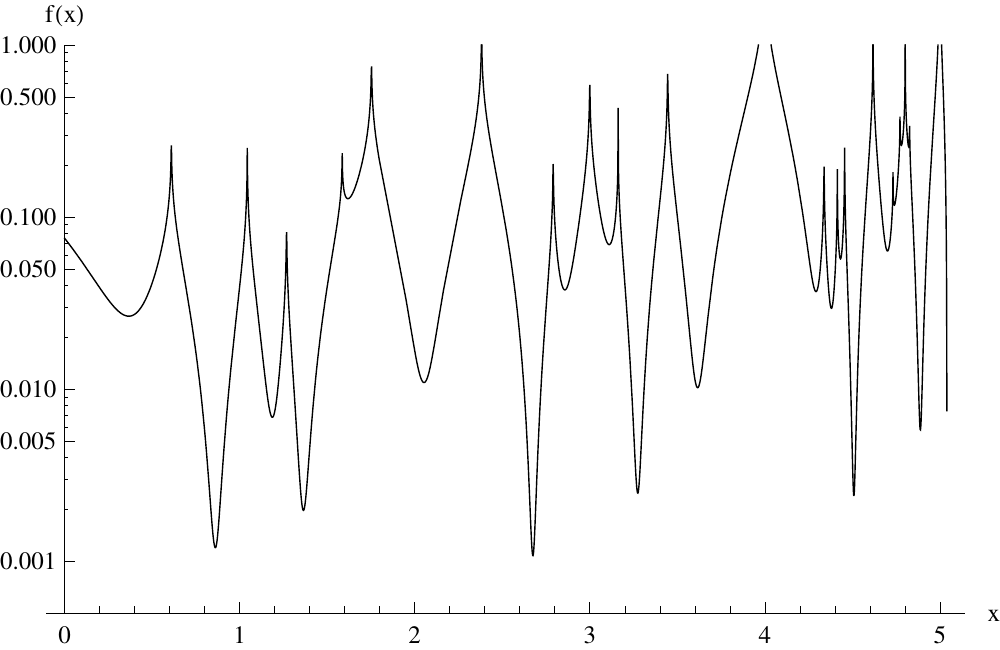}
\]

The derivative of $f(x)$ has $14$ real zeroes in $[0,5+\infrac{1}{25}]$, at which all of $f$ is positive. Also, $f$ is positive at 0 and $5+\infrac{1}{25}$. So $f$ is positive everywhere on $[0,5+\infrac{1}{25}]$ where it is defined.
% derivative zeros are at approximately 0.366922, 0.861237, 1.18687, 1.36444, 1.61857, 2.05342, 2.67493, 2.85914, 3.10545, 3.2802, 3.62544, 3.96255, 4.22432, and 4.5732

Now take any non-zero cyclotomic integer $\beta$ with $\beta \bar\beta = \ho{\beta}^2 \leq 5+\infrac{1}{25}$.

If $\ho{\beta}^2$ is equivalent to a root of some $P_i$, note two things: it cannot be $P_8$, as that has a non-abelian Galois group which would imply that $\beta$ is not a cyclotomic integer. Furthermore $\ho{\beta}^2$ is the largest root of $P_i$. All largest roots of $P_i$, $i \neq 8$, are squares of \robeqtwo{N} for $N$ as shown in the above table.

If $\beta \bar\beta$ is not equivalent to a root of any $P_i$, let $x_j$, $1 \leq j \leq n$, be the conjugates of $\beta \bar\beta$. Note that the conjugates of $\beta \bar\beta$ are $\beta' \bar{\beta'}$ for $\beta'$ the conjugates of $\beta$. Then $0 < x_j \leq 5+\infrac{1}{25}$ and $P_i(x_j) \neq 0$ for any $i, j$, so we have
\begin{align*}\sum^n_{j=1} f(x_j) &> 0\\
\sum_{j=1}^n \left( \frac{13}{4} - x_j - \sum_i \alpha_i \log |P_i(x_j)| \right) &> 0\\
\frac{13}{4} n - \sum_{j=1}^n x_j - \sum_i \left(a_i \sum_{j=1}^n \log|P_i(x_j)|\right) &> 0\\
\frac{13}{4}n -n \M(\beta) - \sum_i a_i \log \left| \prod_{j=1}^n P_i (x_j)\right| &> 0\\
\frac{13}{4} n - n \M(\beta) &> \sum_i a_i \log \left|\prod_{j=1}^n P_i(x_j)\right|\\
\frac{13}{4} n - n \M(\beta) &> \sum_i a_i \log \left|\Norm(P_i(\beta\bar\beta))\right|\\
\frac{13}{4}n - n\M(\beta) &> 0\\
\frac{13}{4} & > \M(\beta)
\end{align*}
\end{proof}

\section{\texorpdfstring{If $\beta \in \field{N}$, then $N$ or $N/2$ is squarefree}{If B is in Q(Zn), then N or N/2 is squarefree}}
\begin{lem}\label{psquared} Suppose $\beta \in \field{N}$ is a minimal exception to Theorem \ref{result}. If $p^2$ divides $N$, then $p = 2$ and $4$ exactly divides $N$.
%
%%\comment{which Lemma?}

%Suppose $\beta \in \field{N}$ is a minimal cyclotomic integer with $\M(\beta)<\infrac{13}{4}$ and $\ho{\beta}^2 < 5.1667$. Suppose further that $p^n$ divides $N$ with $p^n \neq 4$. Then $\Num(\beta) \leq 3$ or $\beta \equiv (\zeta_5 + \zeta_5^4) + \varrho(\zeta_5^2 + \zeta_5^3)$ for some root of unity $\varrho$.
\end{lem}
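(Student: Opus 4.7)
The plan is to argue by contradiction: suppose $\beta$ is a minimal exception with $p^n$ exactly dividing $N$ for some $n \geq 2$ and $(p,n) \neq (2,2)$. Since $\beta$ is an exception, $\ho{\beta}$ is not of the form $\robeqtwo{M}$, so Lemma~\ref{134} gives $\M(\beta) < \infrac{13}{4}$. Because $n \geq 2$, Cassels' unique decomposition applies: write $\beta = \sum_{j=0}^{p-1} \zeta_{p^n}^j \other_j$ with $\other_j \in \ring{N/p}$, and let $\alpha_1,\dots,\alpha_X$ denote the nonzero $\other_j$. Then equation~(\ref{squareEqn}) reads $\M(\beta) = \sum_{i=1}^{X} \M(\alpha_i)$. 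Minimality of $\beta$ forces $X \geq 2$, since otherwise $\beta$ is equivalent to a single $\alpha_i \in \ring{N/p}$, contradicting minimality as $N/p < N$.

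By Kronecker, $\M(\alpha_i) \in \{1\} \cup [\infrac{3}{2}, \infty)$, so $\sum_i \M(\alpha_i) < \infrac{13}{4}$ forces $X \leq 3$. If $X = 3$ (which requires $p \geq 3$) and any $\alpha_i$ has $\Num \geq 2$, then $\M(\beta) \geq 1 + 1 + \infrac{3}{2} = \infrac{7}{2} > \infrac{13}{4}$, a contradiction; hence every $\alpha_i$ is a root of unity, making $\beta$ a sum of three roots of unity with $\Num(\beta) \leq 3$, so Section~2.4 shows $\beta$ is not an exception.

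Thus $X = 2$, and we may write $\beta \equiv \alpha + \zeta_{p^n}^k \gamma$ with $\alpha, \gamma \in \ring{N/p}$ both nonzero and $(k, p) = 1$. The bound $\M(\alpha) + \M(\gamma) < \infrac{13}{4}$, together with $\Num(\delta) \geq 3 \Rightarrow \M(\delta) \geq 2$ and $\Num(\delta) = 2 \Rightarrow \M(\delta) \geq \infrac{3}{2}$, leaves (WLOG $\Num(\alpha) \leq \Num(\gamma)$) only the possibilities $(\Num(\alpha), \Num(\gamma)) \in \{(1,1), (1,2), (1,3), (2,2)\}$. In the first two cases $\Num(\beta) \leq 3$, so Section~2.4 suffices.

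The main obstacle is the remaining two cases. Applying Lemma~\ref{conjLemma} to any simultaneous Galois conjugate $(\sigma\alpha, \sigma\gamma)$ for $\sigma$ fixing $\zeta_{p^n}$, and noting that $\zeta_{p^{n-1}} \in \ring{N/p}$ gives $m = n-1$ and $\theta = \pi/p$, one obtains a conjugate $\beta'$ of $\beta$ with $|\beta'|^2 \geq |\sigma\alpha|^2 + |\sigma\gamma|^2 + 2|\sigma\alpha||\sigma\gamma|\cos(\pi/p)$; in particular $\ho{\alpha}, \ho{\gamma} \leq \ho{\beta} \leq \sqrt{5 + \infrac{1}{25}} < 1 + \sqrt{2}$. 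Consequently, in the $(1,3)$ case, Jones'~\cite{Jones2} classification for $\Num = 3$ and house below $1 + \sqrt{2}$ pins $\gamma$ to an explicit short list; in the $(2,2)$ case, the $\Num = 2$ lemma together with $\M(\alpha) + \M(\gamma) < \infrac{13}{4}$ forces each of $\alpha, \gamma$ to be equivalent to $1 + \zeta_M$ for $M$ in a very short list (essentially $M \in \{5,7\}$). For each resulting candidate pair $(\alpha, \gamma)$, the conjugation bound combined with $\ho{\beta}^2 \leq 5 + \infrac{1}{25}$ restricts the data $(k, N)$ to a finite set of divisors of a fixed integer, each of which can be checked directly, analogous to the $1 \pm i + \zeta_n$ check in Section~2.4. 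This finite but delicate enumeration constitutes the bulk of the work and is the main obstacle to completing the argument.
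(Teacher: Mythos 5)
Your reduction to $X=2$ via the unique $p^n$-decomposition and the bound $\M(\beta)<\infrac{13}{4}$ is exactly the paper's first step, and your exclusions of $X=1$, $X=3$ and $X>3$ are correct. But the case split that follows has a genuine gap: restricting $(\Num(\alpha),\Num(\gamma))$ to $\{(1,1),(1,2),(1,3),(2,2)\}$ is not justified. Cassels' Lemma 3 gives only $\Num\geq 3\Rightarrow\M\geq 2$, not a converse, so nothing you cite rules out $\Num(\alpha)=1$ with $\Num(\gamma)\geq 4$ and $2\leq\M(\gamma)<\infrac{9}{4}$; in that situation Jones' classification of $\Num=3$ integers does not apply and your argument has no branch to fall into. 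The paper avoids this by stratifying on $\M$ and $\ho{\gamma}$ rather than on $\Num$: in the $\M(\alpha)=1$ case it invokes Cassels' corollary to Lemma 5 ($\ho{\gamma}^2>2\Rightarrow\ho{\gamma}^2\geq 3$) and Cassels' Lemma 6 (the classification of $\gamma$ with $\M(\gamma)=\ho{\gamma}^2=2$ into two explicit numbers), which works for arbitrary $\Num(\gamma)$.

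Second, your conjugation bound degenerates exactly where the lemma is hardest. With $m=n-1$ you get $\theta=\pi/p$, so for $p=2$ (i.e.\ $8\mid N$) the cross term $2|\alpha||\gamma|\cos(\pi/2)$ vanishes and $\ho{\beta}^2\geq|\alpha|^2+|\gamma|^2$ is not a contradiction (e.g.\ $|\alpha|=1$ and $\ho{\gamma}^2=3$ gives only $4<5+\infrac{1}{25}$). The paper needs a separate device here: it decomposes $\gamma$ further with respect to $\zeta_{2^{n-1}}$ to force $\beta\equiv 1+\zeta_{2^n}^i(\gamma'+\gamma'')$, yielding $\ho{\beta}^2\geq 4+\sqrt6$, and it runs explicit finite computations for $p^n=8,9$ against Cassels' two exceptional $\gamma$'s. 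Finally, the ``delicate enumeration'' you defer is the actual content of the proof, though you overestimate part of it: in your $(2,2)$ case no enumeration is needed at all, since after the paper's trick of simultaneously conjugating the two copies of $1+\zeta_5$ away from the smallest conjugate, Lemma~\ref{conjLemma} gives $\ho{\beta}^2\geq(2+\sqrt2)(3+\sqrt5)/2\approx 8.9$ outright.
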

Suppose towards a contradiction that $p^n$ exactly divides $N$, with $n \geq 2$ and $p^n \neq 4$.
Write $\beta = \sum_{i=0}^{p-1} \zeta_{p^n}^j \other_j$, with $\other_j \in \field{N/p}$.
We refer to this as the $p$-decomposition of $\beta$.  Let $\alpha_i$ be the $X$ nonzero $\other_i$. We have by Cassels \cite{Cassels} that $\M(\beta) = \sum \M(\alpha_i)$, so by Lemma~\ref{134}, $\sum \M(\alpha_i) < \infrac{13}{4}$. $X$ must be $2$. $X=1$ would mean $N$ is not minimal, $X=3$ would mean $\Num(\beta)=3$, and $X>3$ would mean $\M(\beta) \geq 4$.

Let $\beta = \alpha + \zeta_{p^n}\gamma$, and assume  \WLOG that $\M(\alpha) \leq \M(\gamma)$. Then $\M(\alpha) \leq \infrac{13}{8}$, so $\M(\alpha) = 1$ or $\infrac{3}{2}$.

\subsection{\texorpdfstring{$\M(\alpha) = 1$}{M(a)=1}}

Recall that $\Num(\beta) > 3$, so $\Num(\gamma) \geq 3$.

Assume  \WLOG (by multiplying $\beta$ by a root of unity) that $\alpha = 1$. We know that $2 \leq \M(\gamma) < \infrac{9}{4}$.

\begin{itemize}
\item First assume that $\ho{\gamma}^2 > 2$, then by Cassels' corollary to Lemma 5, we have $\ho{\gamma}^2 \geq 3$. If $p \geq 3$, then by Lemma~\ref{conjLemma}, $\ho{\beta}^2 \geq 4+\sqrt{3}$.

In the case of $2^n$, $n > 2$, write $\gamma = \gamma' + \zeta_{2^{n-1}} \gamma''$, with $\gamma', \gamma'' \in \field{N/4}$. $\M(\gamma')+\M(\gamma'') = \M(\gamma) < 2 \infrac{1}{4}$, so either both $\gamma'$ and $\gamma''$ are roots of unity and $\beta$ is 3 roots of unity, or one of $\gamma'$ or $\gamma''$ are 0.
The latter case implies $\beta \equiv 1 + \zeta_{2^n}^i(\gamma' + \gamma'')$, and by Lemma~\ref{conjLemma}, $\ho{\beta}^2 \geq 4+\sqrt{6}$.

\item The other case is if $\M(\gamma) = \ho{\gamma}^2 = 2$. By Cassels' Lemma 6, $\gamma$ is equivalent to one of $(-1 + \sqrt{-7})/2 \equiv 1+\zeta_7+\zeta_7^3$ or $(\sqrt{5}+\sqrt{-3})/2 \equiv \zeta_3-\zeta_5-\zeta_5^{-1}$. We break down into cases as follows:

\begin{itemize}
\item $p^n=3^2$ and $\gamma \equiv 1+\zeta_7+\zeta_7^3$

then $\theta \leq 2\pi/9$ and $\ho{\beta}^2 > 5.1667$.

\item $p^n=3^2$ and $\gamma \equiv \zeta_3-\zeta_5-\zeta_5^{-1}$

Here, we have $\gamma = \zeta_m \cdot (\zeta_3^j-\zeta_5^k-\zeta_5^{-k})$, and so, after multiplying $\beta$ by some root of unity, we may assume $\beta$ is of the form $1+\zeta_{3^2}^i \cdot \zeta_m^l \cdot (\zeta_3^j-\zeta_5^k-\zeta_5^{-k})$ for some values of $i,j,k,l$. If $2^4$, $5^2$, or any prime greater than $5$ divides $m$, we may conjugate $\beta$ by Lemma~\ref{conjLemma}. We may also assume (by changing $i$) that $3$ does not divide $m$. This limits $m$ to $8$ possible values. We may conjugate $\zeta_m$ such that $l=1$. There are then $384 = 2 \cdot 4 \cdot 6 \cdot 8$ possibilities for $\beta$. Computation reveals that all of these have $\ho{\beta}^2 > 5.094$.

\item $p^n=2^3$ and $\gamma \equiv \zeta_3-\zeta_5-\zeta_5^{-1}$ or $\gamma \equiv 1+\zeta_7+\zeta_7^3$

Then $\beta$ is of the form $1+\zeta_{2^3}^i \cdot \zeta_m^l \cdot \gamma'$ for some $i,l$, and $\gamma'$ a conjugate of $\zeta_3-\zeta_5-\zeta_5^{-1}$ or $1+\zeta_7+\zeta_7^3$. Reasoning as above, $m$ divides $3^2 \cdot 5 \cdot 7$. There are then $12$ possible values for $m$. There are $672 = 4 \cdot 12 \cdot (8+6)$ possibilities for $\beta$. Computation reveals that all of these have $\ho{\beta}^2 > 5.0489$.

%1+\zeta_7+\zeta_7^{-1}

\item In all other cases, $\theta \leq \pi/5$. Hence $\ho{\beta}^2 \geq 3+2\sqrt{2}\cos(\pi/5) \approx 5.28825$.
\end{itemize}
\end{itemize}

\subsection{\texorpdfstring{$\M(\alpha) = \infrac{3}{2}$}{M(a)=3/2}}
Note that $\M(\gamma) < \infrac{13}{4}-\infrac{3}{2}=\infrac{7}{4}$.

\begin{itemize}
\item $\M(\gamma) = \infrac{3}{2}$

$\alpha$ and $\gamma$ are both equivalent to $1 + \zeta_5$. Let $\zeta_5 = \e{5}$, then by conjugating and multiplication by a root of unity, assume \WLOG that $\beta = (1 + \zeta_5) + \varrho(\zeta_5^i + \zeta_5^j)$ for some root of unity $\varrho$. If the difference between $i$ and $j$ (mod 5) is 2 or 3, then $\beta$ is equivalent to $(\zeta_5 + \zeta_5^4) + \varrho' (\zeta_5^2 + \zeta_5^3)$. If the difference $i-j \mod 5$ is $1$ or $4$, then $\beta$ is equivalent to $\alpha + \zeta_{p^n} \gamma$ with $|\alpha| = |\gamma|=(1+\sqrt{5})/2$. Then by Lemma \ref{conjLemma}, regardless of $p^n$ we have $\theta \leq \pi/4$, and $\ho{\beta}^2 \geq (2+\sqrt{2}) (3+\sqrt{5})/2 \approx 8.93853$.

\item $\M(\gamma) = \infrac{5}{3}$

$\alpha$ is equivalent to $1 + \zeta_5$, and $\gamma$ is equivalent to $1 + \zeta_7$. Again by Lemma \ref{conjLemma}, regardless of $p^n$ we have $\theta \leq \pi/4$, and $\ho{\beta}^2$ is even larger than the preceding case.
\end{itemize}

\section{\texorpdfstring{If $\beta \in \field{N}$, then $N$ divides $420$}{If B is in Q(Zn), then N divides 420}}

\begin{lem} If $\ho{\beta}^2 < 5.3$, then either $\beta$ is on the list of Theorem~\ref{result}, or $\beta \in \field{420}$.
\end{lem}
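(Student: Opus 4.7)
The plan is to rule out, prime by prime, any prime $p \ge 11$ dividing $N$; combined with Lemma~\ref{psquared} this forces $N \mid 420 = 4 \cdot 3 \cdot 5 \cdot 7$.

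Fix a prime $p \ge 11$ and suppose $p \mid N$. By Lemma~\ref{psquared}, $p$ exactly divides $N$, so we have the $p$-decomposition $\beta = \sum_{j=0}^{p-1}\zeta_p^j \eta_j$ with $\eta_j \in \field{N/p}$. Choose the representation (unique up to a common additive constant) that minimises the number $X$ of nonzero $\eta_j$, and relabel the nonzero values as pairwise distinct $\alpha_1,\dots,\alpha_X$. Cassels' identity~\eqref{exactEqn}, combined with $\M(\gamma)\ge 1$ for every nonzero cyclotomic integer $\gamma$ (Kronecker), gives the coarse bound
\[(p-1)\M(\beta) \ge X(p-X) + \binom{X}{2} = \tfrac{1}{2}X(2p - X - 1).\]
Together with $\M(\beta) < 13/4$ from Lemma~\ref{134}, this forces $X \le 3$ when $p \ge 11$ and $X = 2$ when $p \ge 17$. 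Section~2.4 lets us assume $\Num(\beta)\ge 4$, which rules out the case where all $\alpha_i$ are roots of unity.

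The main case is $X = 2$, where $\beta \equiv \alpha_1 + \zeta_p \alpha_2$ and Lemma~\ref{conjLemma} applies at $\theta = 2\pi/p \le 2\pi/11$ (note $m = 0$ since $\zeta_p \notin \field{N/p}$). When $\M(\alpha_1) = 1$, rotate by a root of unity to arrange $\alpha_1 = 1 \in \field{1}$; then $\Num(\alpha_2) \ge 3$ gives $\ho{\alpha_2} \ge \sqrt{2}$, and the coprime-conductor form of the lemma yields
\[\ho{\beta}^2 \ge 1 + \ho{\alpha_2}^2 + 2\ho{\alpha_2}\cos(2\pi/p) \ge 3 + 2\sqrt{2}\cos(2\pi/11) > 5.37,\]
contradicting $\ho{\beta}^2 < 5.3$. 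When instead both $\M(\alpha_i)\ge 3/2$ and their conductors are coprime, the same form gives $\ho{\beta}^2 \ge 3 + 3\cos(2\pi/11) > 5.5$. The case $X = 3$ (only relevant for $p \in \{11, 13\}$ by the bound above) is handled analogously: the $\M$-bound from Cassels' identity forces at most one $\alpha_i$ to have $\M > 1$, collapsing the configuration to one easily dispatched by Lemma~\ref{conjLemma} applied to the dominant summand, or by a small computer check.

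The main obstacle is the sub-case of $X = 2$ with $\M(\alpha_1), \M(\alpha_2) \ge 3/2$ but conductors sharing a common prime, where Lemma~\ref{conjLemma}'s coprime form is unavailable. Here the bound $\M(\alpha_1) + \M(\alpha_2) \le \tfrac{13}{4}\cdot\tfrac{p-1}{p-2}$ combined with the Section~2.1 classification restricts each $\alpha_i$ to explicit forms $1+\zeta_m$ for bounded $m$, or to one of the two exceptional $\M = 2$ classes of Cassels' Lemma~6. This yields a finite but non-trivial list of pairs $(\alpha_1,\alpha_2)$ to verify computationally, directly in the style of the $2^3$ and $3^2$ sub-case enumerations already carried out in Section~4.
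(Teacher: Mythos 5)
There is a genuine gap at the very first step, and it invalidates the case reduction on which the rest of the argument rests. Your ``coarse bound'' $(p-1)\M(\beta) \ge X(p-X) + \binom{X}{2}$ assumes that every difference term satisfies $\M(\alpha_i-\alpha_j)\ge 1$, i.e.\ that the nonzero $\other_j$ in the minimal representation are pairwise distinct. They need not be: minimising $X$ does not force distinctness (e.g.\ $\other_0=\other_1=1$, $\other_2=\zeta_6$, rest zero, is already minimal), and when $\alpha_i=\alpha_j$ the corresponding term contributes $0$. Dropping those terms, the correct coarse bound for $X$ equal roots of unity is only $(p-1)\M(\beta)\ge X(p-X)$, which for $p=11$ permits $X=4,5$ and for $p=13$ permits $X=4$. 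These cases are not vacuous: $\beta = 1+\zeta_{11}+\zeta_{11}^2+\zeta_{11}^5$ (with $X=4$, all $\alpha_i=1$, $\ho{\beta}=\robeqtwo{11}$), $\beta=1+\zeta_{11}+\zeta_{11}^2+\zeta_{11}^4+\zeta_{11}^7$ (with $X=5$, $\ho{\beta}=\robeqone{6}$), and $\beta=1+\zeta_{13}+\zeta_{13}^3+\zeta_{13}^9$ (with $X=4$, $\ho{\beta}=\robeqtwo{4}$) all have $\ho{\beta}^2<5.3$ and lie outside $\field{420}$, so your claimed restriction to $X\le 3$ (and to $X=2$ for $p\ge 17$) is simply false. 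The paper's proof must, and does, enumerate these larger-$X$ configurations with repeated $\alpha_i$ and verify that every surviving $\beta$ has house on Robinson's list; your argument never sees them.

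A secondary, smaller issue: in the $X=2$ case with both $\M(\alpha_i)\ge \infrac{3}{2}$ and non-coprime conductors, you defer to ``a finite but non-trivial list'' without bounding it. The finiteness is not automatic from the $\M$-classification alone --- one needs the more careful simultaneous-conjugation arguments of Section~\ref{p11X2} (e.g.\ the $1+\zeta_7$ versus $1+\zeta_7$ case, where both factors are conjugated away from their smallest conjugate) to get usable lower bounds, and in the $\M(\alpha)=1$, $\M(\gamma)=2$ situation one also needs Cassels' Lemma~6 to pin down $\gamma$. This part is repairable along the lines you sketch, but the $X\ge 4$ omission is not: it requires restoring the case analysis for repeated $\alpha_i$ that your distinctness assumption erased.
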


First we'll establish some facts that we use throughout. Recall that $X$ refers to the number of nonzero terms in the $p$-decomposition of $\beta$.
 
We have from Cassels' \cite{Cassels} (3.5) and Lemma~\ref{134}
\begin{equation}\label{two}p \geq 11 \Rightarrow X \leq \frac{p-1}{2}.\end{equation}

By equation~\ref{exactEqn}, and since $ \M(\beta) < \infrac{13}{4}$ by Lemma~\ref{134}, we have
\begin{equation}\label{134eqn}\frac{13}{4} (p-1) > (p-X) \sum_i^X  \M(\alpha_i) + \sum_{1 \leq i<j\leq X}  \M(\alpha_i - \alpha_j). \end{equation}

Now let $\beta \in \field{N}$ be a minimal cyclotomic integer that is an exception to Theorem~\ref{result}. Let $p$ be the largest prime dividing $N$, and suppose $p>7$. By Lemma~\ref{psquared}, $p$ exactly divides $N$. We proceed by considering different combinations of $p$ and $X$.

\subsection{\texorpdfstring{$p = 11$}{p=11}}
Note that by equation~\ref{two}, $X \leq 5$.

\subsubsection{\texorpdfstring{$X=2$}{X=2}}\label{p11X2}

By equation~\ref{134eqn}

\[\frac{65}{2} > 9(\M(\gamma)+\M(\alpha)) + \M(\gamma-\alpha).\]

Assume  \WLOG that $\M(\alpha) \leq \M(\gamma)$.
\[\M(\alpha) \leq \frac{65}{36} \Rightarrow \M(\alpha)=1, \frac{3}{2}, \frac{5}{3}, \frac{7}{4}\mathrm{\ or\ }\frac{9}{5}.\]

We consider each possible value of $\M(\alpha)$ below:
\begin{itemize}
\item{$\M(\alpha)=1$.}
As $\Num(\beta) > 3$, $\Num(\gamma) > 2$ and thus $\M(\gamma) \geq 2$. By conjugating we can assume $|\gamma| \geq \sqrt{2}$, then by Lemma~\ref{conjLemma} we have $\ho{\beta}^2 \geq 3+2 \sqrt{2} \cos(2\pi/11) \approx 5.37942$.

\item{$\M(\alpha)=\infrac{3}{2}$.} If $\Num(\gamma) > 2$ the inequality is false, since $\M(\gamma-\alpha) \geq 1$ and $M(\gamma) \geq 2$. If $\Num(\gamma) = 1$ then $\Num(\beta) = 3$. So let $\gamma$ be equivalent to $1+\zeta_n$.

If $n=5$, $\alpha \equiv \gamma \equiv 1+\zeta_5$. As we have previously argued, either $\beta \equiv (\zeta_5 + \zeta_5^4) + \varrho (\zeta_5^2 + \zeta_5^3)$ for some root of unity $\varrho$, or we can conjugate to assume that $|\alpha|=|\gamma|=(1+\sqrt{5})/2$. Then $\ho{\beta}^2 \geq (3+\sqrt{5}) (1+\cos(2\pi/11)) \approx 9.64093$.

If $n$ is coprime to $5$ then $n\geq4$ and by (Lemma~\ref{conjLemma}), with $\theta=2\pi/11$, $\ho{1+\zeta_5}=(1+\sqrt{5})/2$, and $\ho{1+\zeta_n} \geq \sqrt{2}$, we have $\ho{\beta}^2 \geq 8.46802$ (Lemma~\ref{conjLemma}).

If $n$ is divisible by $5$ it must be at least $10$. Conjugate $\beta$ so $|1+\zeta_n| = \ho{1+\zeta_n} \geq \ho{1+\zeta_{10}} = \sqrt{(5+\sqrt{5})/2}$. The smallest conjugate of $1+\zeta_5$ is $(\sqrt{5}-1)/2$. Thus by Lemma~\ref{conjLemma}, with $\theta=2\pi/11$, $\ho{\beta}^2 > 5.9779$ .

\item{$\M(\alpha)=\infrac{5}{3}$.} Again we have $\Num(\gamma) = 2$. Let $\gamma$ be equivalent to $1+\zeta_n$.

If $n=7$ then both $\alpha$ and $\gamma$ are equivalent to $1+\zeta_7$. We may conjugate them simultaneously so neither is the smallest conjugate as follows: let $\zeta_7$ be $\e{7}$. Assume \WLOG that $\alpha = 1 + \zeta_7$, and $\gamma = \varrho(1 + \zeta_7^i)$ for $\varrho$ a root of unity. Then $\beta = (1+\zeta_7)+\zeta_{11}\varrho(1+\zeta_7^i)$. If $i \neq 3,4$ we are done, otherwise, $\beta$ under the conjugation $\zeta_7 \rightarrow \zeta_7^2$ is $(1+\zeta_7^2)+\zeta_{11}\varrho'(1+\zeta_7^{2i})$, which satisfies our requirement. We now have $|\alpha|, |\gamma| \geq |\e[2]{7}|$, and then by Lemma~\ref{conjLemma}, $\ho{\beta}^2 > 6.09385$. 

The case $n$ coprime to $7$ easily follows from the previous case with $\M(\alpha)=\infrac{3}{2}$, since $\ho{1+\zeta_7} > \ho{1+\zeta_5}$.

If $n$ is divisible by $7$, similarly to before, conjugate $\gamma$ to its largest conjugate and then $|\gamma| = \ho{\gamma} \geq |\e{14}|$ and $|\alpha| \geq |\e[3]{7}|$. We then have $\ho{\beta}^2 > 5.66523$

\item{$\M(\alpha)=\infrac{9}{5}$.} Then $\alpha \equiv 1+\zeta_{11}$, but this is impossible as $\zeta_{11} \notin \field{N/11}$.

\item{$\M(\alpha)=\infrac{7}{4}$.} 
From the inequality, $\M(\gamma) = \infrac{7}{4}$ or $\infrac{11}{6}$. However, we see that $\infrac{11}{6}$ makes the inequality false with $\M(\alpha-\gamma) \geq 1$, so $\M(\gamma) = \infrac{7}{4}$.

Recall that $\M(\rho) = \infrac{7}{4} \Rightarrow \rho \equiv 1 + \zeta_{30}$. Conjugate $\alpha$ to $1 + \e{30}$. This will fix $\gamma$ to be some other conjugate, of which the smallest is $1+ \e[13]{30}$. By  Lemma~\ref{conjLemma}, $\ho{\beta}^2 > 5.71638$.

\end{itemize}

\subsubsection{\texorpdfstring{$X=3$}{X=3}}
By equation~\ref{134eqn}
\[\frac{65}{2} > 8 \sum_{i=1}^3 \M(\alpha_i) + \sum_{1 \leq i<j\leq 3}  \M(\alpha_i - \alpha_j).\]
If more than one $\alpha_i$ is not a root of unity, the inequality is false. We may assume that not all three $\alpha_i$ are roots of unity, as the case $\Num(\beta)=3$ is done. Then notice that $\Num(\alpha_i)>2$ again makes the inequality false. So we may assume  \WLOG that $\Num(\alpha_1)=\Num(\alpha_2)=1$, and $\Num(\alpha_3)=2$.

Either the respective \M~values are $(1,1,\infrac{3}{2})$, $(1,1,\infrac{5}{3})$, or $(1,1,\infrac{7}{4})$.

We must calculate $\ho{\beta}$ with $\beta$ of the form $1 + \zeta_{11} \zeta_{420}^i + \zeta_{11}^j \zeta_{420}^k (1+\zeta_n)$ for all $i,j,k$ where $n = 5, 7$, or $30$. Some computation shows that the smallest such $\beta$ is $1+\zeta_{77}+\zeta_{77}^{11}+\zeta_{77}^{55}$ and $\ho{\beta} > 5.761$.

\subsubsection{\texorpdfstring{$X=4$}{X=4}}
By equation~\ref{134eqn}
\[\frac{65}{2}>7 \sum_{i=1}^4  \M(\alpha_i) + \sum_{1 \leq i<j\leq 4}  \M(\alpha_i - \alpha_j). \]
If any $\alpha_i$ is not a root of unity, this inequality is false.

Therefore each $\alpha_i$ is a root of unity and
\[\frac{9}{2} > \sum_{1 \leq i<j\leq 4}  \M(\alpha_i - \alpha_j),\]
which implies there are at most 2 distinct roots of unity.

There remain 3 cases after conjugation. $(\alpha_1,\alpha_2,\alpha_3,\alpha_4)$ is equivalent to one of the following, for $\zeta$ some root of unity: \[(1,1,\zeta,\zeta) \mathrm{\ or\ } (1,1,1,\zeta) \mathrm{\ or\ } (1,1,1,1).\]
In both cases with $\zeta$, we must have $\M(1-\zeta)=1$ or else the inequality is false. Thus $\zeta = \zeta_6$.

The only such $\beta$ with $\ho{\beta}^2<6$ of the above form are below
\[\begin{array}{c|c|c}
(\alpha_1, \alpha_2, \alpha_3, \alpha_4)& \beta& \ho{\beta}\\
\hline \arraySpace
(1,1,1,1)&1+\zeta_{11}+\zeta_{11}^2+\zeta_{11}^5&\robeqtwo{11}\\ \arraySpace
(1,1,1,\zeta_6)& \zeta_6+\zeta_{11}+\zeta_{11}^3+\zeta_{11}^8& \robeqtwo{33}\\ \arraySpace
(1,1,\zeta_6,\zeta_6)& \zeta_6+\zeta_6\zeta_{11}+\zeta_{11}^3+\zeta_{11}^9&\robeqtwo{22}
\end{array}\]

\subsubsection{\texorpdfstring{$X=5$}{X=5}}\label{above}
By equation~\ref{134eqn}
\[\frac{65}{2}> 6 \sum_{i=1}^5 \M(\alpha_i) + \sum_{1 \leq i<j\leq 5}  \M(\alpha_i - \alpha_j).\]
If any $\alpha_i$ is not a root of unity it has $\M(\alpha_i)\geq \infrac{3}{2}$ and this inequality is false. So all $\alpha_i$ are roots of unity, and
\[\frac{5}{2} >  \sum_{1 \leq i<j\leq 5}  \M(\alpha_i - \alpha_j).\]
However, if there exists $\alpha_i\neq\alpha_j$ then the above inequality is false, so we may assume  \WLOG that $\alpha_i=1$ for all $i$.

One can compute every 
\[\beta=1+\zeta_{11}+\zeta_{11}^a+\zeta_{11}^b+\zeta_{11}^c\]
with $a, b, c$ distinct and not equal to $0$ or $1$, and $\ho{\beta}<\sqrt{5}+.1$. They are all equivalent to
\[1+ \zeta_{11}+\zeta_{11}^2+\zeta_{11}^4+\zeta_{11}^7 .\]
Which has
\[\ho{\beta} = \robeqone{6} = \robeqtwo{4}. \]

\subsection{\texorpdfstring{$X=2$}{X=2}}

Let $\beta = \alpha + \zeta_p \gamma$, with $\alpha, \gamma \in \field{N/p}$ and $p \geq 13$. By equation~\ref{134eqn},
\begin{align*}
\frac{13}{4} (p-1) &> (p-2)(\M(\alpha)+\M(\gamma))+\M(\alpha-\gamma) \\
\frac{13}{4} \cdot \frac{p-1}{p-2} &> \M(\alpha)+\M(\gamma)+\frac{\M(\alpha-\gamma)}{p-2} \\
\frac{39}{11} &> \M(\alpha)+\M(\gamma)
\end{align*}

From here, the reasoning follows almost exactly as in Section~\ref{p11X2}. As $p>11$, any argument based on Lemma~\ref{conjLemma} is still valid, as $\theta$ will be smaller. There are two cases where we need to change the argument: $1+\zeta_{11}$ can appear, and the difference term $\M(\alpha-\gamma)$ may be larger.

We can still assume $\M(\alpha) \neq \infrac{9}{5}$, now because $\frac{1}{2} \cdot 39/11 < 9/5$.

In the $\M(\alpha) = \infrac{7}{4}$ case, $\M(\gamma) = \infrac{11}{6}$ or $\infrac{9}{5}$ is still not possible: now by the restriction on $\M(\alpha)+\M(\gamma)$ instead of the other reasons.

\subsection{\texorpdfstring{$p = 13$}{p=13}}
\subsubsection{\texorpdfstring{$X=3$}{X=3}}

By equation~\ref{134eqn}
\[39 > 10 \sum \M(\alpha_i) + \sum \M(\alpha_i - \alpha_j). \]
We may assume that the \Num~values $(\Num(\alpha_1), \Num(\alpha_2), \Num(\alpha_3))$ are $(1, 1, 2)$. If they are less, $\Num(\beta)=3$, and if they are more, the inequality is false.

The \M~values must be $(1, 1, \infrac{3}{2})$ or $(1, 1, \infrac{5}{3})$ for the inequality to hold.
\begin{itemize}
\item $(1, 1, \infrac{3}{2})$

$\sum \M(\alpha_i - \alpha_j)<4$. Neither $\Num(\alpha_3-\alpha_1)=3$ nor $\Num(\alpha_3-\alpha_2)=3$. If so, then either $\alpha_2-\alpha_1=0$ and $0+2+2 \geq 4$, or $\Num(\alpha_2-\alpha_1)=1$ and $1+1+2 \geq 4$.

Assume \WLOG that $\alpha_1 = 1$. Because $\Num(\alpha_3 - \alpha_1) \leq 2$, there is some cancellation occurring in the difference $\alpha_3 - \alpha_1$. In particular, $\alpha_3$ must be equal to $1+\zeta_5$, $\zeta_5+\zeta_5^i$, or $\zeta_6+\zeta_6\zeta_5$. We divide into cases based on $\Num(\alpha_3 - \alpha_1)$, and employ a result of Mann~\cite{Mann} (see also Poonen and Rubinstein~\cite{Poonen}). For small $n$, he classified vanishing sums of $n$ roots of unity. For $n<6$, these must be sums of groups comprised of equally spaced roots of unity.
\begin{itemize}
\item $\Num(\alpha_3 - \alpha_1) = 0$, then we have a vanishing sum of 3 roots of unity, which is impossible when two of them differ by a fifth root of unity.
\item $\Num(\alpha_3 - \alpha_1) = 1$, then we have a vanishing sum of 4 roots of unity, and by Poonen, it must consist of two groups of 2 roots of unity each of whose sum vanishes. $\alpha_3 = 1 + \zeta_5$.
\item $\Num(\alpha_3 - \alpha_1) = 2$, then we have a vanishing sum of 5 roots of unity, and by Poonen, it must be a primitive vanishing sum of 5 roots of unity, or is two vanishing sums, one of 2 roots of unity and one of 3 roots of unity. If we are in the 5 case and $\alpha_3 = \zeta_5 + \zeta_5^i$ or we are in the 2-3 case and $\alpha_3 = \zeta_6 + \zeta_6\zeta_5$.
\end{itemize}
So, we may assume \WLOG that $(\alpha_1, \alpha_2, \alpha_3)$ is one of the following: $(1, \zeta_5, \zeta_5^i+\zeta_5^j)$, $(1, \zeta_5, \zeta_6+\zeta_5^i\zeta_6^j)$, or $(1, \zeta_6, \zeta_6^i+\zeta_5\zeta_6^j)$ for some $i,j$.

We compute the house of all $\beta$ with restrictions from above, and in all cases, $\ho{\beta}^2>5.66$.

\item $(1, 1, \infrac{5}{3})$

$\sum \M(\alpha_i - \alpha_j)<\infrac{7}{3}$. $\alpha_1=\alpha_2$, and $\alpha_3-\alpha_1$ is a root of unity. So we may assume that $\alpha_1=\alpha_2=1$ and $\alpha_3=1+\zeta_7$. The smallest such $\beta$ is $1+\zeta_{13}+\zeta_{13}^2(1+\zeta_5)$ with $\ho{\beta}^2>10$.
\end{itemize}

\subsubsection{\texorpdfstring{$X=4$}{X=4}}
We proceed as in Section~\ref{above}. By equation~\ref{134eqn}
\[39> 9 \sum_{i=1}^4 \M(\alpha_i)\]
implies that $\alpha_i$ are all roots of unity, and 
\[ 3 >  \sum_{1 \leq i<j\leq 4}  \M(\alpha_i - \alpha_j) \]
implies that they are all the same root of unity. Thus $\beta \in \field{13}$ is a sum of $4$ roots of unity.

One can verify that the the only such $\beta$ with $\ho{\beta}<\sqrt{5} + .1$ is
\[\beta = 1 + \zeta_{13}+ \zeta_{13}^3+ \zeta_{13}^9,\]
with
\[\ho{\beta} = \robeqone{6} = \robeqtwo{4}.\]

\subsubsection{\texorpdfstring{$X\geq 5$}{X is greater than or equal to 5}}
By equation~\ref{two}, $p=13$ implies that $X \leq 6$. Equation \ref{134eqn} gives us
\[\frac{13}{4} \cdot 12 > (13-X)X,\] 
which is false for $X=5,6$.

\subsection{\texorpdfstring{$X = 3$}{X=3}}

\subsubsection{\texorpdfstring{$p=17$}{p=17}}
By equation~\ref{134eqn}
\[52 > 14 \sum  \M(\alpha_j)  +  \sum\M(\alpha_i - \alpha_j).\]
So we may assume  \WLOG that $\alpha_1$ and $\alpha_2$ are both roots of unity. Also, assume $\alpha_3$ is not a root of unity, since this case has been done already. We may conclude that $\M(\alpha_3)=\infrac{3}{2}$, otherwise the inequality is false.

Now,
\[3 >   \M(\alpha_1 - \alpha_2) + \M(\alpha_1 - \alpha_3) + \M(\alpha_2 - \alpha_3). \]

For this to hold, we must have $\alpha_1=\alpha_2$ and $\alpha_3-\alpha_1$ a root of unity. We may assume \WLOG that $(\alpha_1, \alpha_2, \alpha_3)=(1, 1, 1+\zeta_5)$, and then that $\beta$ is equivalent to
\[\beta = 1 + \zeta_{17}+\zeta_{17}^j(1+\zeta_5)\]
for some $j$. One can verify that the smallest such such $\beta$ is $1 + \zeta_{17}+\zeta_{17}^5(1+\zeta_5)$ with $\ho{\beta}^2>9$.

\subsubsection{\texorpdfstring{$p=19$}{p=19}}
By equation~\ref{134eqn}
\[\frac{117}{2}  > 16 \sum  \M(\alpha_i) + \sum\M(\alpha_i - \alpha_j). \]
As in the previous section, $(\M(\alpha_1), \M(\alpha_2), \M(\alpha_3))=(1, 1, \frac{3}{2})$. Then
\[\frac{5}{2} >   \M(\alpha_1 - \alpha_2) + \M(\alpha_1 - \alpha_3) + \M(\alpha_2 - \alpha_3) \]
and we may assume \WLOG that $(\alpha_1, \alpha_2, \alpha_3) = (1, 1, 1+\zeta_5)$. $\beta$ is equivalent to 
\[\beta = 1 + \zeta_{19}+\zeta_{19}^j(1+\zeta_5)\]
for some $j$. One can verify that the smallest such such $\beta$ is $1 + \zeta_{19}+\zeta_{19}^5(1+\zeta_5)$ with $\ho{\beta}^2>10$.

\subsubsection{\texorpdfstring{$p \geq 23$}{p is greater than or equal to 23}}
In this case, all $\alpha_i$ must be roots of unity. Otherwise, this contradicts equation~\ref{134eqn}. Thus $\Num(\beta)=3$ and there are no exceptions to theorem~\ref{result}.

\subsection{\texorpdfstring{$X \geq 4$ and $p \geq 17$}{X is greater than or equal to 4, p is greater than or equal to 17}}
By equation~\ref{134eqn}
\[\frac{13}{4}(p-1)>(p-X)X,\]
which is false for $X \geq 4$ and $p \geq 17$ when $X \leq (p-1)/2$ as required by equation \ref{two}. We can see this, as 
\[\frac{d}{dX}(pX-X^2) = p-2X,\]
so for $x<p/2$, $pX-X^2$ increases with $x$. Thus the minimal value for $(p-X)X$ in the region is at $X=4$, but
\[\frac{13}{4}(p-1)>(p-4)4\]
is false for $p \geq 17$.

\section{There are no exceptions in \texorpdfstring{$\field{420}$}{Q(Z 420)}}
\begin{lem} Theorem~\ref{result} holds for $\beta \in \field{420}$.

%Better Lemma: If $\beta \in \field{420}$ with $\ho{\beta}<5.04$, $\M(\beta)<\infrac{13}{4}$, and $\Num(\beta)>3$, then $\beta$ is of a form from Cassels, or $\beta$ is equivalent to one of the elements listed in appendix \ref{420app}.
\end{lem}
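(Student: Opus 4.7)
The plan is to use the $p$-decompositions of $\beta$ for each odd prime $p \in \{7, 5, 3\}$ dividing $420$, together with the constraint $\M(\beta) < \infrac{13}{4}$ from Lemma~\ref{134} and the fact that $\Num(\beta) \leq 3$ has already been handled, to reduce to a finite enumeration that can be verified by computer.

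So assume $\Num(\beta) \geq 4$. For each $p \in \{7, 5, 3\}$, since $p$ exactly divides $420$, write $\beta = \sum_{j=0}^{p-1} \zeta_p^j \eta_j$ with $\eta_j \in \field{420/p}$, and let $X$ be the number of nonzero terms $\alpha_i$. Equation~\ref{exactEqn} combined with $\M(\beta) < \infrac{13}{4}$ gives
\[\frac{13(p-1)}{4} > (p - X) \sum_{i=1}^{X} \M(\alpha_i) + \sum_{1 \leq i < j \leq X} \M(\alpha_i - \alpha_j),\]
which, exactly as in the preceding section for primes $\geq 11$, forces $X$ to be small and restricts $\M(\alpha_i)$ and $\M(\alpha_i - \alpha_j)$ to a short list of possible values. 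Independently, for the prime $2$ (with $4 \| 420$), equation~\ref{squareEqn} gives $\M(\beta) = \M(\alpha_1) + \M(\alpha_2)$ in the unique decomposition $\beta = \alpha_1 + \zeta_4 \alpha_2$ with $\alpha_i \in \field{210}$, providing a parallel constraint on each $\alpha_i$.

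Combining the four families of constraints, and using Lemma~\ref{conjLemma} together with multiplication by roots of unity to quotient by equivalence, confines $\beta$ (up to equivalence) to a finite list in which each $\alpha_i$ is a root of unity or of the form $1 + \zeta_n$ for small $n \in \{5, 7, 30\}$, with support sitting in a small subfield of $\field{420}$ such as $\field{60}$, $\field{84}$, $\field{140}$, or $\field{70}$. For each candidate we compute $\ho{\beta}$ numerically to the accuracy prescribed by the computational-accuracy lemma, and verify it is either one of $\robeqone{N}$, $\robeqtwo{N}$, $\robnumtwo$, the exceptional $\ournumone \in \field{70} \subset \field{420}$, or strictly greater than $\sqrt{5 + \infrac{1}{25}}$.

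The main obstacle is completeness of the enumeration. A single $p$-decomposition bound is often too loose to pin down $\beta$ on its own, so one must cross-reference the constraints coming from decompositions at different primes to force the support into a small enough subfield for a computational check to be feasible. Moreover, the exceptional form $\ournumone$ has to emerge naturally from one of the cases—plausibly the $X = 4$ case of the $7$-decomposition with each $\alpha_i$ a root of unity, since it lives in $\field{70} = \field{420/(2 \cdot 3)}$—so the bounds must be tight enough to exclude everything else while admitting this one form. I expect the conceptual content to parallel the preceding section closely, with the bulk of the work being the careful case analysis and numerical verification.
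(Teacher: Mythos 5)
Your overall flavor (prime decompositions plus the bound $\M(\beta) < \infrac{13}{4}$, reduced to a finite computation) is right, but the plan as stated has a structural gap: the decomposition inequalities alone cannot confine $\beta$ to a finite list when $\Num(\beta)$ is small. Starting from $\Num(\beta) \geq 4$, a case such as the $5$-decomposition with $X = 3$ and $\Num$-values $(1,1,2)$ satisfies every inequality you write down with room to spare, and ``cross-referencing'' the decompositions at $7$, $3$, and $2$ does not obviously close this off; you would still face infinitely many candidate tuples of roots of unity unless you first bound $\Num(\beta)$ from above. The paper resolves this by inverting the order of the argument: it \emph{first} exhaustively computes all $\beta \in \field{420}$ with $\Num(\beta) \leq 6$ (a large but finite search, normalized up to equivalence), and it is in this sweep---not in any decomposition case---that all the listed forms and the single new exception $\ournumone$ appear. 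Your guess that the exception should emerge from the $X=4$ case of the $7$-decomposition is therefore off-target.

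With $\Num(\beta) > 6$ in hand, the paper then uses only the $5$-decomposition, $\beta = \sum \zeta_5^i \other_i$ with $\other_i \in \field{84}$, and derives a flat contradiction to equation~\ref{134eqn} in every case $X = 1, \dots, 5$; no further computation and no appeal to Lemma~\ref{conjLemma} is needed at that stage. The ingredient that makes this work, and which is absent from your proposal, is the set of field-specific lower bounds on $\M$ for elements of $\field{84}$: since $1 + \zeta_5$ cannot appear there, $\Num(\alpha) = 2$ forces $\M(\alpha) \geq \infrac{5}{3}$, and the results of~\cite{Calegari} give $\M(\alpha) \geq \infrac{5}{2}$ for $\Num(\alpha) = 4$ and $\M(\alpha) \geq \infrac{17}{6}$ for $\Num(\alpha) \geq 5$. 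Together with $\sum_i \Num(\alpha_i) \geq \Num(\beta) > 6$, these push every surviving configuration to $S \geq 13$. Without both the preliminary $\Num(\beta) \leq 6$ computation and these sharpened bounds, your case analysis would neither terminate nor exclude the remaining cases.
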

We have computed all $\beta \in \field{420}$ with $\Num(\beta) \leq 6$ as follows:  \WLOG we assume that the first root is 1, the second root $\zeta_{420}^i$ has $i$ dividing $420$ (or equal to 0), and the other roots $\zeta_{420}^j$ have $(420, j) \geq i$. No exceptions were found, thus we know that any exceptions $\beta$ must have $\Num(\beta) > 6$.

Write $\beta$ as $\sum_{i=0}^{4} \zeta_5^i \other_i$ with $\other_i \in \field{84}$. Let $X$ be the minimal number of nonzero $\other_i$ that can represent $\beta$ in this way, and let $\alpha_i$ be these nonzero $\other_i$.

In the below cases we make use of several facts about $\alpha \in \field{84}$:
\begin{itemize}
\item If $\Num(\alpha) = 2$, then $\M(\alpha) \geq \infrac{5}{3}$, as $1+\zeta_5$ cannot appear.
\item If $\Num(\alpha)=4$, then $\M(\alpha) \geq \infrac{5}{2}$, by~\cite{Calegari} 7.0.8.
\item If $\Num(\alpha) \geq 5$, then $\M(\alpha) \geq \infrac{17}{6}$, by~\cite{Calegari} 7.0.8.
\end{itemize}

In each of the following cases, we demonstrate a contradiction to equation~\ref{134eqn}:
\[ 13 > (5-X) \sum_i^X  \M(\alpha_i) + \sum_{1 \leq i<j\leq X} \M(\alpha_i - \alpha_j) =S.  \]

\subsection{\texorpdfstring{$X=1$}{X=1}}

In this case, $\beta \in \field{84}$. We can write $\beta = \alpha + \zeta_4 \gamma$ with $\alpha, \gamma \in \field{21}$. We know that $\M(\alpha) + \M(\gamma) < \infrac{13}{4}$, so we may assume \WLOG that $\M(\alpha) \leq \infrac{13}{8}$. Then $\alpha$ is a root of unity. But then $\M(\gamma) < \infrac{9}{4}$ and $\Num(\gamma) \geq 6$, a contradiction by~\cite{Calegari} 7.0.5.

\subsection{\texorpdfstring{$X=2$}{X=2}}

$\M(\alpha_i) \geq \infrac{23}{6}$ contradicts equation~\ref{134eqn}. So by~\cite{Calegari} 7.0.9, $\Num(\alpha_i) \leq 5$. 

In the following table and throughout we list lower  bounds on the values of $\M$ and $S$. In all cases, $S\geq13$, contradicting equation~\ref{134eqn}.

\[
\begin{array}{cc|cc|c|c}
\multicolumn{2}{c|}{\Num(\alpha_i)}&\multicolumn{2}{|c|}{\M(\alpha_i)} &\multicolumn{1}{|c|}{\M(\alpha_1-\alpha_2)}&\multicolumn{1}{|c}{S}  \\\hline
\geq 2& 5& \tabfrac{5}{3} &\tabfrac{17}{6}&  2& \tabfrac[15]{1}{2}\\
\geq 3& 4&  2 &  \tabfrac{5}{2} &  1 &  \tabfrac[14]{1}{2}
\end{array}
\]

\subsection{\texorpdfstring{$X=3$}{X=3}}
The column $\M(\alpha_i-\alpha_j)$ is listed in the order $\alpha_1-\alpha_2, \alpha_1-\alpha_3, \alpha_2-\alpha_3$.
\[
\begin{array}{ccc|ccc|ccc|c}
\multicolumn{3}{c|}{\Num(\alpha_i)}&\multicolumn{3}{|c|}{\M(\alpha_i)} &\multicolumn{3}{|c|}{\M(\alpha_i-\alpha_j)}&\multicolumn{1}{|c}{S}  \\
\hline
1& 1& \geq 5&1&1&\tabfrac{17}{6}&0&2&2&\tabfrac[13]{2}{3}\\
1& 2& \geq 4&1&\tabfrac{5}{3}&\tabfrac{5}{2}&1&2&\tabfrac{5}{3}&15\\
1& \geq 3& \geq 3&1&2&2&\tabfrac{5}{3}&\tabfrac{5}{3}&0&\tabfrac[13]{1}{3}\\
2&2&3& \tabfrac{5}{3}&\tabfrac{5}{3}&2&0&1&1&\tabfrac[12]{2}{3}^*\\
2&2&\geq 4& \tabfrac{5}{3}&\tabfrac{5}{3}& \tabfrac{5}{2}& 0&\tabfrac{5}{3}&\tabfrac{5}{3} & 15 \\
2& \geq 3& \geq 3&\tabfrac{5}{3}&2&2&1&1&0&\tabfrac[13]{1}{3}\\
\geq 3& \geq 3& \geq 3&2&2&2&1^{**}&0&0&13\\
\end{array}
\]
\begin{description}
\item[$^*$] See that $\M(\alpha_1 - \alpha_2)=0$ and $\M(\alpha_3 - \alpha_1)=1$, or else $S>13$. But then $\beta$ can be written as a sum of 5 roots of unity: take $\other'_i = \other_i-\alpha_1$.
\item[$^{**}$] This results from assuming at least one pair is different. If all $\alpha_i$ are equal, then $\beta$ can be represented with $X=2$ by taking $\other'_j = \other_j - \alpha_1$.
\end{description}

\subsection{\texorpdfstring{$X=4$}{X=4}}

No two $\alpha_i$ are equal. If $\alpha_j=\alpha_k$, then there is another representation with $X<4$ given by $\other'_i = \other_i-\alpha_j$ for all $i$.

The column $\M(\alpha_i-\alpha_j)$ is listed in the order $\alpha_1-\alpha_2, \alpha_1-\alpha_3, \alpha_1-\alpha_4, \alpha_2-\alpha_3, \alpha_2-\alpha_4, \alpha_3-\alpha_4$.

\[
\begin{array}{cccc|cccc|cccccc|c}
\multicolumn{4}{c|}{\Num(\alpha_i)}&\multicolumn{4}{|c|}{\M(\alpha_i)} &\multicolumn{6}{|c|}{\M(\alpha_i-\alpha_j)}&\multicolumn{1}{|c}{S}  \\
\hline
1&1&1&\geq 4&1&1&1&\tabfrac{5}{2}&1&1&2&1&2&2&\tabfrac[14]{1}{2}\\
1&1&2& \geq 3&1&1&\tabfrac{5}{3}&2&1&1&\tabfrac{5}{3}&1&\tabfrac{5}{3}&1&13\\
1&1& \geq 3&\geq 3&1&1&2&2&1&\tabfrac{5}{3}&\tabfrac{5}{3}&\tabfrac{5}{3}&\tabfrac{5}{3}&1&\tabfrac[14]{2}{3}\\
1& 2& 2& 2&1&\tabfrac{5}{3}&\tabfrac{5}{3}&\tabfrac{5}{3}&1&1&1&1&1&1&12^\dagger\\
1&\geq 2& \geq 2&\geq 3&1&\tabfrac{5}{3}&\tabfrac{5}{3}&2&1&1&\tabfrac{5}{3}&1&1&1&13\\
2& 2& 2& 2&\tabfrac{5}{3}&\tabfrac{5}{3}&\tabfrac{5}{3}&\tabfrac{5}{3}&\tabfrac{5}{3}^{\dagger\dagger}&1&1&1&1&1&\tabfrac[13]{1}{3}\\
\geq 2& \geq 2& \geq 2& \geq 3 &\tabfrac{5}{3}&\tabfrac{5}{3}&\tabfrac{5}{3}&2&1&1&1&1&1&1&13
\end{array}
\]
\begin{description}
\item[$^\dagger$] If any of the differences is more than a single root of unity, it increases $S$ by at least $\infrac{2}{3}$, so at most one difference is more than a single root of unity. Thus we may assume \WLOG that $\M(\alpha_2 - \alpha_1) = \M(\alpha_3 - \alpha_1) = 1$. Then $\Num(\beta) \leq 6$, as evidenced by $\other'_i = \other_i-\alpha_1$ for all $i$.
\item[$^{\dagger\dagger}$] If every difference is a single root of unity, $\Num(\beta) = 5$: put $\other'_i = \other_i-\alpha_1$ for all $i$.
\end{description}

\subsection{\texorpdfstring{$X=5$}{X=5}}
This is not minimal, there is always a representation with $X < p$: put $\other'_i = \other_i-\alpha_1$ for all $i$.

\bibliographystyle{amsalpha}
\bibliography{bib}

%\appendix
%\section{\texorpdfstring{$\beta \in \ring{420}$}{B is in Z(420)}}\label{420app}
%This is a list of all $\beta \in \ring{420}$ not equivalent to a form from Cassels with $\Num(\beta) \leq 6$ and $\ho{\beta}^2<5.1$. As proven in the paper, any other $\beta \in \ring{420}$ must have $\M(\beta) \geq \infrac{13}{4}$.
%
%Interpret $\{0,6,60,174\}$ as $\beta \equiv \zeta^0+\zeta^6+\zeta^{60}+\zeta^{174}$ where $\zeta=\zeta_{420}$. All the below $\beta$ have $\ho{\beta}$ as in theorem \ref{result}, except for \{0,6,60,174\} with $\ho{\beta}^2 \approx 5.01765$
%
%$\Num(\beta)=4$:
%\{0,6,60,174\},
%\{0,10,50,180\},
%\{0,10,60,180\},
%\{0,10,60,190\},
%\{0,14,28,182\},
%\{0,14,35,189\},
%\{0,14,35,266\},
%\{0,14,98,322\},
%\{0,15,60,180\},
%\{0,15,75,300\},
%\{0,21,63,294\},
%\{0,28,84,182\},
%\{0,30,120,180\}.
%
%$\Num(\beta)=5$:
%\{0,6,66,252,306\},
%\{0,7,84,98,266\},
%\{0,10,60,110,240\},
%\{0,10,60,120,250\},
%\{0,14,112,168,196\},
%\{0,50,110,170,360\}.
%
%$\Num(\beta)=6$:
%\{0,7,21,84,182,273\},
%\{0,7,84,105,189,322\},
%\{0,10,60,250,300,310\},
%\{0,15,75,120,180,315\}.

\end{document}